\numberwithin{equation}{section}
\newtheorem{proposition}{Proposition}[section]
\newtheorem{theorem}[proposition]{Theorem}
\newtheorem{lemma}[proposition]{Lemma}
\newtheorem{corollary}[proposition]{Corollary}
\newtheorem{claim}[proposition]{Claim}
\newtheorem*{problem*}{Inverse Sieve Problem}
\theoremstyle{definition}
\theoremstyle{remark}
\newtheorem{remark}[proposition]{Remark}
\newtheorem*{remark*}{Remark}
\newtheorem*{remarks*}{Remarks}
\def\house#1{{%
    \setbox0=\hbox{$#1$}
    \vrule height \dimexpr\ht0+1.4pt width .5pt depth \dp0\relax
    \vrule height \dimexpr\ht0+1.4pt width \dimexpr\wd0+2pt depth \dimexpr-\ht0-1pt\relax
    \llap{$#1$\kern1pt}
    \vrule height \dimexpr\ht0+1.4pt width .5pt depth \dp0\relax}}
\begin{document}

\title[]{Effective Hilbert's Irreducibility Theorem for global fields}

\author[M. Paredes, R. Sasyk]{Marcelo Paredes $^{2}$ \MakeLowercase{and} Rom\'an Sasyk $^{1,2}$}

\address{$^{1}$Instituto Argentino de Matem\'aticas Alberto P. Calder\'on-CONICET,
Saavedra 15, Piso 3 (1083), Buenos Aires, Argentina;}

\address{$^{2}$Departamento de Matem\'atica, Facultad de Ciencias Exactas y Naturales, Universidad de Buenos Aires, Argentina.}

\email{\textcolor[rgb]{0.00,0.00,0.84}{mparedes@dm.uba.ar}}
\email{\textcolor[rgb]{0.00,0.00,0.84}{rsasyk@dm.uba.ar}}

\subjclass[2010]{11C08, 11D45, 11G50, 11R09, 11R32, 12E25}

\keywords{Hilbert's irreducibility theorem, global fields, number of rational solutions of diophantine equations, determinant method, distribution of Galois groups.}

\begin{abstract}	
We prove an effective form of  Hilbert's irreducibility theorem for polynomials over a global field $K$. More precisely, we give effective bounds for the number of specializations $t\in \mathcal{O}_K$ that do not preserve the irreducibility or the Galois group of a given irreducible polynomial $F(T,Y)\in K[T,Y]$. The bounds are explicit in the height and degree of the polynomial $F(T,Y)$, and are optimal in terms of the size of the parameter $t\in \mathcal{O}_K$. Our proofs deal with the function field and number field cases in a unified way. 
\end{abstract}

\maketitle

\section{Introduction}

A cornerstone in diophantine geometry and Galois theory is Hilbert's irreducibility theorem, which in its original form proved in \cite{Hilbert1892} states that for any polynomial $F(T,Y)\in \mathbb{Z}[T,Y]$ of degree at least one in $Y$, irreducible in $\mathbb{Q}[T,Y]$ there exist infinitely many integers $n\in \mathbb{Z}$ such that the specialized polynomial $F(n,Y)\in \mathbb{Z}[Y]$ is irreducible in $\mathbb{Q}[Y]$. This was then used by Hilbert to study the inverse Galois problem, more precisely, he showed that in order to prove that a finite group $G$ is the Galois group of a finite extension of $\mathbb{Q}$ it is enough to show that $G$ is the Galois group of some finite extension of the function field $\mathbb{Q}(T)$. Furthermore he constructed polynomials with rational coefficients with Galois group $S_{n}$ and $A_{n}$ for all $n$. 

Since then, there have been numerous proofs, improvements and generalizations of Hilbert's irreducibility theorem. Of special interest are those variants which are quantitative. For instance, let $K$ be a global field and let $\mathcal{O}_K$ be its ring of integers. Let us consider a polynomial $F\in \mathcal{O}_K[T_1,\ldots, T_s,Y_1,\ldots, Y_r]$ which is irreducible as a polynomial in $K(T_1,\ldots, T_s)[Y_1,\ldots, Y_r]$. We want to mesure   how likely is it that a specialization $F(\boldsymbol t,Y_1,\ldots, Y_r)$ with $\boldsymbol t:=(t_1,\ldots ,t_s)\in \mathcal{O}_K^s$  remains irreducible. In the case when $K=\mathbb{Q}$ it is natural to consider the quantity
\[N_{F,\mathbb{Q}}(B)  :=\left|\left\{ \boldsymbol t\in \mathbb{Z}^{s}:\max_i|t_i|\leq B\text{ and }F(\boldsymbol t,X_{1},\ldots, X_{r})\text{ is not irreducible in }\mathbb{Q}[X_{1},\ldots, X_{r}] \right\}\right|,\]
whereas for global fields a natural generalization of this quantity is 
\[N_{F,K}(B):=\left|\left\{ \boldsymbol t\in [B]_{\mathcal{O}_K}^s:F(\boldsymbol t,Y_{1},\ldots, Y_{r})\text{ is not irreducible in }K[Y_{1},\ldots, Y_{r}] \right\}\right|,\]
where $[B]_{\mathcal{O}_K}^s$ is the global field analogue of the integer box $[-B,B]^s \cap \mathbb{Z}$, which will be defined in Section \ref{alturas}.

We also want to mesure how likely is that a specialization of $F$ on $\boldsymbol t$ preserves the Galois group of $F$. Specifically, for a global field $K$ and $F\in \mathcal{O}_{K}[T_1,\ldots, T_s,Y]$ an irreducible polynomial, let $L$ be the splitting field of $F$ over $K(T_1,\ldots, T_s)$. Let $G$ be the Galois group of $L/K(T_1,\ldots, T_s)$. For a given $\boldsymbol t\in \mathcal{O}_K^s$ let $L_{\boldsymbol t}$ be the splitting field of $F(t_1,\ldots, t_s, Y)$, and let $G_{\boldsymbol t}$ be the Galois group of $L_{\boldsymbol t}/K$. Then one wants to estimate the quantity
 \[E_{F,K}(B):=\left|\left\{\boldsymbol t\in [B]_{\mathcal{O}_K}^s:G_{\boldsymbol t}\neq G\right\}\right|.\]
 
We will use the asymptotic notation $X\lesssim_{C_1,\ldots,C_n }Y$ to mean $|X|\leq c|Y|$ for some constant $c$ depending on the parameters $C_1,\ldots ,C_n$. With this notation at hand, we record some of the known estimates for $|E_{F,K}(B)|$. When $K$ is a number field,  in \cite[Theorem 2.1 and Theorem 2.5]{Cohen} Cohen used the large sieve to prove that the quantities $N_{F,K}(B)$ and $E_{F,K}(B)$ are both bounded by $\lesssim_{r,s,K,F}B^{s-\frac{1}{2}}\log(B)$ with the implicit constant depending polynomially on the height of the polynomial $F$. Furthermore in \cite[Section 13, Theorem 1]{Serre} Serre showed that the bounds can be improved to $\lesssim_{r,s,K,F}B^{s-\frac{1}{2}}(\log(B))^{\gamma}$ with $\gamma<1$. In the case when $K=\mathbb{Q}$ and $s=r=1$, in \cite{SZ} by means of the determinant method of Bombieri and Pila \cite{Bombieri0}, Schinzel and Zannier obtained effective estimates for $N_{F,\mathbb{Q}}(B)$. Improving upon this work, in \cite[$\S$ 3.2]{Walkowiak} Walkowiak used the $p$-adic determinant method of Heath-Brown \cite{Heath-Brown} to prove the bound $N_{F,\mathbb{Q}}(B)\leq c(\log(H(F)))^{19}B^{\frac{1}{2}}(\log(B))^5$ with an explicit $c$ depending on the degree of $F$, and with $H(F)$ the height of $F$. Recently, when $K=\mathbb{F}_q(T)$, \cite[Theorem 1.1]{BSE} Bary-Soroker and Entin used the large sieve for function fields to prove that Cohen's bound also holds in this case.   

One may further sharpen the counting function $E_{F,K}(B)$ to take into consideration the algebraic structure of $G$. In this direction, in \cite[Theorem 1.4]{Zywina} Zywina used the larger sieve of Gallagher to prove that for any number field $K$, if $L/K(T_1,\ldots, T_s)$ is geometric (i.e. if $L\cap \overline{K}=K$) it holds the bound $E_{F,K}(B)\lesssim_{r,s,K,F} B^{s-1+\beta_G}\log(B)$ where $\beta_G:=\max_{M\leq G}\frac{|\bigcup_{g\in G}gMg^{-1}|}{|G|}$ and $M$ runs over the maximal subgroups of $G$. In \cite[Corollary 1]{Castillo}, Castillo and Dietman used Galois resolvents, and bounds obtained in \cite{Browning0} by means of the $p$-adic determinant method, to show that for $K=\mathbb{Q}$, for any subgroup $H$ of $G$, and for all $\varepsilon>0$ it holds that
\[\left|\left\{\boldsymbol t\in [B]_{\mathcal{O}_K}^s:G_{\boldsymbol t}=H\right\}\right|\lesssim_{F,\varepsilon} B^{s-1+|G/H|^{-1}+\varepsilon}.\]
Among other applications, the p-adic determinant method was used in \cite{Salberger2, Heath-Brown} by Browning, Heath-Brown, and Salberger  to prove the uniform dimension growth conjecture of Heath-Brown and Serre for varieties over $\mathbb{Q}$ of degree $d\geq 6$. Specifically, they proved that for any integral projective variety $X$ defined over $\mathbb{Q}$ of degree $d\geq 6$, it holds
\[|\{\boldsymbol x\in X(\mathbb{Q}):H(\boldsymbol x)\leq B\}|\lesssim_{\dim(X),d,\varepsilon}B^{\dim(X)+\varepsilon},\]
where $H(\boldsymbol x)$ is the projective height of $\boldsymbol x$. The conjecture for degree $d\geq 4$ was solved by Salberger in \cite{Salberger}. In order to do so, in that article Salberger introduced the global determinant method. This method was refined by Walsh in \cite{Walsh3} in his solution of a conjecture of Heath-Brown posed in \cite{Heath-Brown} on bounds for rational points of integral curves.  All this was further explored in \cite{Cluckers} by Castrick, Cluckers, Dittmann, and Nguyen to prove that the dimension growth conjecture holds without the $\varepsilon$ factor when $d\geq 5$ and with a polynomial dependency in $d$. In \cite{Vermeulen} Vermeulen adapted \cite{Cluckers} to prove the uniform dimension growth conjecture for integral projective varieties defined over $\mathbb{F}_q(T)$ of degree $d\geq 64$. Finally, in \cite{PS} we extended \cite{Cluckers, Salberger, Vermeulen} and proved the uniform dimension growth conjecture for integral projective varieties over global fields of degree $d\geq 4$.

In the present article we apply some techniques and results of \cite{PS} to extend and improve the aforementioned results on effective Hilbert's irreducibility theorem to global fields. Specifically, in \cite{PS} we proved a Bombieri-Pila  type of bound for curves over global fields, which depends polynomially  on the degree of the curve (the cases $K=\mathbb{Q}$ and $K=\mathbb{F}_q(T)$ were already covered in \cite{Cluckers,Vermeulen}). This, together with a technique we elaborated in \cite{PS} that allows us to suppose that a polynomial with coefficients in a global field has coefficients in $\mathcal{O}_K$ and it is almost primitive in the case when $\mathcal{O}_K$ is not a principal ideal domain, are the main novelties in our approach. It seems pertinent to emphasize that our proofs deal simultaneously with the number field and the function field cases. Because of that, for any $a_{1},a_{2}\in \mathbb{R}$ we will use the notation $[a_{1},a_{2}]:=\begin{cases}a_{1} & \text{ if char}(K)=0\\ a_{2} & \text{ if char}(K)>0\end{cases}$. We are now in position to state the first result of this article.

\begin{theorem}
Let $K$ be a global field, and let $F\in K[T,Y]$ be an irreducible polynomial of degree $d_{T}$ in $T$ and $d_Y$ in $Y$. Then 
\[N_{F,K}(B)\lesssim_{K} 2^{[25d_{Y},36d_{Y}]}d_{Y}^{[26,35]}d_{T}^{[21,26]}\left(\log(H_{K}(F))+1\right)^{[6,10]}B^{\frac{1}{2}},\]
where $H_K(F)$ denotes the $K$-relative height of $F$ defined in Section \ref{alturas}.
\label{HIT01}
\end{theorem}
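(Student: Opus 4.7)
The plan is to follow the resolvent-based strategy pioneered by Schinzel--Zannier and refined by Walkowiak, with two essential substitutions tailored to the global-field setting: the $p$-adic determinant method is replaced by the Bombieri--Pila-type bound for integral curves over a global field that we proved in~\cite{PS} (whose dependence on the degree is polynomial), and the preliminary reduction to integer coefficients is carried out via the almost-primitivization technique from~\cite{PS}, which circumvents the failure of $\mathcal O_K$ to be a PID and lets us treat number fields and function fields simultaneously. After this reduction I may assume $F\in\mathcal O_K[T,Y]$ with $H_K(F)$ changed only by a controlled factor.

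For each $1\le e\le\lfloor d_Y/2\rfloor$ I would form the resolvent
\[
R_e(T,X)\;:=\;\prod_{\substack{S\subseteq\{1,\ldots,d_Y\}\\|S|=e}}\Bigl(X-\sum_{i\in S}\alpha_i(T)\Bigr)\;\in\;K[T,X],
\]
where $\alpha_1(T),\ldots,\alpha_{d_Y}(T)$ are the roots of $F(T,Y)$ in an algebraic closure of $K(T)$. If $F(t_0,Y)$ admits a $K$-factor of degree $e$, then the sum of the roots of that factor is a $K$-rational specialization of some $\sigma_S(T)=\sum_{i\in S}\alpha_i(T)$, producing a $K$-rational point $(t_0,x_0)$ on the curve $R_e(T,X)=0$ (integral after clearing the leading coefficient $a(T)^e$). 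Decomposing $R_e=\prod_j P_{e,j}$ into $K[T,X]$-irreducibles, each $P_{e,j}$ corresponds to an orbit of $G:=\mathrm{Gal}(L/K(T))$ on the size-$e$ subsets of roots, with $\deg_X P_{e,j}$ equal to the orbit size. The irreducibility of $F$ in $K[T,Y]$ forces $G$ to act transitively on $\{\alpha_1,\ldots,\alpha_{d_Y}\}$, and a singleton orbit on size-$e$ subsets would yield a degree-$e$ factor of $F$ over $K(T)$, contradicting irreducibility. Hence $\deg_X P_{e,j}\ge 2$ for every $j$ and every $1\le e\le d_Y-1$.

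I would then bound $|\sigma_S(t_0)|\ll d_Y H_K(F)B^{O(d_T)}$ by standard root-size estimates, and bound $H_K(P_{e,j})$ polynomially in $H_K(F)$, $d_T$, $d_Y$ via Mahler-measure bookkeeping carried out uniformly across the two characteristics. Applying the Bombieri--Pila bound from~\cite{PS} to each integral curve $P_{e,j}=0$ produces a $B^{1/2}$-type estimate with polynomial dependence on $\deg P_{e,j}$ and logarithmic dependence on $H_K(P_{e,j})$; here the hypothesis $\deg P_{e,j}\ge\deg_X P_{e,j}\ge 2$ is exactly what delivers the $B^{1/2}$ exponent. Summing over $e=1,\ldots,\lfloor d_Y/2\rfloor$ and over the at most $\binom{d_Y}{e}$ irreducible factors of $R_e$ yields the stated inequality, the $2^{d_Y}$-factor reflecting the total count of size-$e$ subsets.

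The principal difficulty I anticipate lies in accurately propagating heights through the resolvent construction: each factor $P_{e,j}$ must satisfy $\log H_K(P_{e,j})\le\mathrm{poly}(\log H_K(F),\,d_T,\,d_Y)$ with explicit polynomial exponents, in order to match the $(\log H_K(F)+1)^{[6,10]}$ dependence in the statement, and doing this uniformly in both characteristics is delicate because the Mahler-measure apparatus behaves differently for function fields. A secondary subtle point is the passage from $K$-irreducibility to \emph{geometric} irreducibility of each $P_{e,j}$ (required for a clean application of the Bombieri--Pila bound from~\cite{PS}), which will follow from an analysis of the geometric Galois group $\mathrm{Gal}(L\cdot\overline K/\overline K(T))$ but needs careful bookkeeping to avoid spurious factors coming from constant-field extensions in the function-field case.
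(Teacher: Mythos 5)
Your overall architecture (reduce to $\mathcal O_K$-coefficients via the almost-primitivization lemma, pass to a resolvent whose irreducible factors have $Y$-degree at least $2$, confine the auxiliary coordinate to a box of size $B^{O(d_T)}$ by Liouville, and invoke the Bombieri--Pila-type bound from \cite{PS}) matches the paper's. But there is a genuine gap at the central step. You build the resolvent only out of the \emph{sums} $\sigma_S=\sum_{i\in S}\alpha_i$ and argue that $\deg_X P_{e,j}\ge 2$ because the Galois group has no singleton orbit on size-$e$ subsets. That inference is false: $\deg_X P_{e,j}$ is the degree of the minimal polynomial of the \emph{value} $\sigma_S$, i.e.\ the index of $\mathrm{Stab}_G(\sigma_S)$, and this stabilizer can be strictly larger than $\mathrm{Stab}_G(S)$ --- indeed $\sigma_S$ can lie in $K(T)$ even though $S$ is not Galois-stable. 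Concretely, for $F(T,Y)=Y^4-T$ and the pair $S=\{T^{1/4},-T^{1/4}\}$ one has $\sigma_S=0\in K(T)$, so the relevant factor of your $R_2$ is $X$ itself, every $t_0$ produces a point on it, and your count degenerates; yet $Y^4-t_0$ does factor over $\mathbb Q$ whenever $t_0$ is a square. The repair is exactly what the paper's Lemma \ref{Pw} does: track \emph{all} elementary symmetric functions $\tau_j(y_i:i\in\omega)$, $1\le j\le|\omega|$, of the roots of the putative degree-$e$ factor. If all of them lay in $K(T)$ the factor itself would lie in $K(T)[Y]$, contradicting irreducibility of $F$; so some $\tau_j\notin K(T)$, its minimal polynomial $P_{\omega,j}$ is monic in $Y$ of degree between $2$ and $2^{d_Y}$, and $P_{\omega,j}(t,Y)$ acquires an $\mathcal O_K$-root (in the example, $\tau_2=-\sqrt T$ with minimal polynomial $Y^2-T$, which is what yields $B^{1/2}$).

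Two smaller points. First, a direct application of the Bombieri--Pila bound in the box $[cB^{d_T}]^2_{\mathcal O_K}$ gives $B^{d_T/\deg P_{\omega,j}}$, which is \emph{not} $B^{1/\deg_Y P_{\omega,j}}$ when the total degree of $P_{\omega,j}$ is small compared with $d_T\deg_Y P_{\omega,j}$; you need the intermediate statement of Theorem \ref{Hilbert1}, whose proof inserts the Schinzel--Zannier substitution $Y\mapsto T^E+Y$ with $E\asymp d_Td_Y\log H/\log\log H$ to raise the total degree before applying Theorem \ref{BP}. Second, your worry about geometric irreducibility is unnecessary: Theorem \ref{BP} (Corollary 5.15 of \cite{PS}) only requires irreducibility over $K$, the absolute-irreducibility dichotomy being absorbed into the quantity $b(P)$ inside the minimum.
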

Theorem \ref{HIT01} gives the best possible bound in terms of the magnitude of $B$, as it can be seen by the example $F(T,Y)=Y^2-T^2$. Furthermore the dependence on $F$ in the bound is completely explicit. When $K=\mathbb{Q}$ this result improves upon the bounds given in \cite{Walkowiak}. We remark that  for number fields $K\neq \mathbb{Q}$ and for function fields this is the first effective bound of this type (the non effective bound $N_{F,K}(B)\lesssim_{F,K}B^{\frac{1}{2}}$ for number fields is a consequence of Siegel's finiteness theorem). 

In order to prove Theorem \ref{HIT01} we follow the classical approach that reduces the problem of bounding $N_{F,K}(B)$ to that of bounding the number of $\mathcal{O}_K$-roots of any specialization of a certain irreducible polynomial $P(T,Y)\in \mathcal{O}_K[T,Y]$. In that sense, we prove the following result.

\begin{theorem}
Let $K$ be a global field, and let $P\in \mathcal{O}_{K}[T,Y]$ be an irreducible polynomial. Let $d_{T}$ and $d_{Y}$ be the degrees in $T$ and $Y$ of $P$, respectively, and let $d$ be the total degree of $P$. Then the number of $t\in [B]_{\mathcal{O}_{K}}$ such that $P(t,Y)$ has a solution $y\in \mathcal{O}_{K}$ is
\[\left|\left\{t\in [B]_{\mathcal{O}_{K}}:P(t,Y)\text{ has a solution }y\in \mathcal{O}_{K}\right\}\right|\lesssim_{K} d_{Y}^{[12,16]}d_{T}^{[5,9]}(\log(H_{K}(P))+1)^{[6,10]}B^{\frac{1}{d_{Y}}}. \] 
\label{Hilbert1}
\end{theorem}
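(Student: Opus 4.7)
The plan is to reduce Theorem \ref{Hilbert1} to counting $\mathcal{O}_K$-integer points on the irreducible affine curve $C:P(T,Y)=0$ and to invoke the Bombieri--Pila type bound for curves over global fields established in \cite{PS}. Each $t$ counted in the theorem contributes at least one point $(t,y)\in C(\mathcal{O}_K)$, so it suffices to bound
\[
\#\{(t,y)\in C(\mathcal{O}_K) : t\in [B]_{\mathcal{O}_K}\}.
\]
As a preparation I would normalise $P$ as an almost primitive polynomial in $\mathcal{O}_K[T,Y]$ via the technique recalled from \cite{PS}; this keeps the height under control and allows us to treat the number field and function field cases on the same footing even when $\mathcal{O}_K$ is not a PID.

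The second step is to bound the $Y$-coordinate. Writing $P(T,Y)=\sum_{i=0}^{d_Y}a_i(T)Y^i$ with $a_i\in\mathcal{O}_K[T]$ of degree at most $d_T$, Cauchy's root bound at each archimedean place $v$ gives
\[
|y|_v \;\le\; d_Y\,\max_{i<d_Y}\left|\frac{a_i(t)}{a_{d_Y}(t)}\right|_v \;\lesssim\; d_T\,H_K(P)^{O(1)}\,B^{d_T},
\]
provided $a_{d_Y}(t)\ne 0$; the exceptional set $\{t:a_{d_Y}(t)=0\}$ has size at most $d_T$ and is absorbed into the final constant. Coupled with the analogous ultrametric estimates at the finite places (which are what make the almost-primitive normalisation useful), the point $(t,y)$ lies in a skewed $\mathcal{O}_K$-box of shape $[B]_{\mathcal{O}_K}\times [H_K(P)^{O(1)}B^{d_T}]_{\mathcal{O}_K}$.

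Next, one applies the explicit Bombieri--Pila bound of \cite{PS} to the irreducible affine curve $C$ inside this box. The exponent $B^{1/d_Y}$ reflects the fact that the projection $\pi_T:C\to \mathbb{A}^1_T$ is a degree-$d_Y$ cover: informally, the $d_Y$ Puiseux branches of $C$ at $T=\infty$ are algebraic functions of $T$ of degree $d_Y$, and the determinant method produces at most $O(B^{1/d_Y})$ integer points on each branch for $t\in [B]_{\mathcal{O}_K}$. The polynomial dependencies $d_Y^{[12,16]}$ and $d_T^{[5,9]}$, together with the factor $(\log H_K(P)+1)^{[6,10]}$, come from the explicit constants in the bound of \cite{PS} combined with the $H_K(P)^{O(1)}B^{d_T}$ estimate for $|y|$, whose logarithm contributes to the $\log H_K(P)$ term.

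The principal obstacle is arranging this application so that the $B$-exponent is $1/d_Y$ rather than the weaker exponent $O(\max(B,B')^{1/d+\varepsilon})$ that a naive isotropic Bombieri--Pila bound would give on the skewed box $[B]\times[H^{O(1)}B^{d_T}]$. This likely requires invoking a version of the bound of \cite{PS} that is adapted to the bidegree $(d_T,d_Y)$ of $C$, or a slicing-and-rescaling argument that reduces the skewed count to a controlled number of isotropic subproblems. In either case, the polynomial powers of $d_T$ and $d_Y$ coming out of the auxiliary subdivisions and the determinant method must be tracked carefully to match the explicit exponents claimed in the statement.
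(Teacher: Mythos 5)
Your reduction---normalising $P$ via Lemma \ref{serre}, bounding the $Y$-coordinate by the Liouville/Cauchy estimate so that $(t,y)$ lands in a box of size roughly $H_K(P)B^{d_T}$, and then invoking the Bombieri--Pila type bound of \cite{PS}---matches the first half of the paper's argument. But the proposal stops exactly where the real work begins, and neither of the two escape routes you sketch is available. The bound imported from \cite{PS} (Theorem \ref{BP}) is stated only for the \emph{total} degree $d$ and an isotropic box: applied to $P$ in $[cH_K(P)B^{d_T}]_{\mathcal{O}_K}^{2}$ it yields $\lesssim H_K(P)^{1/d}B^{d_T/d}$, and since $d\le d_T+d_Y\le d_Td_Y$ whenever $d_T,d_Y\ge 2$, the exponent $d_T/d$ is in general at least as large as $1/d_Y$ (and typically strictly larger), so no amount of bookkeeping turns this directly into the claimed bound. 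There is no bidegree-adapted version of the curve bound to invoke, your appeal to ``$O(B^{1/d_Y})$ points per Puiseux branch'' is not something the determinant method of \cite{PS} delivers as stated, and a slicing-and-rescaling of the skewed box does not obviously recover the exponent $1/d_Y$ either. Moreover, even if the exponent of $B$ were right, the factor $H_K(P)^{1/d}$ coming from the enlarged box must be reduced to a power of $\log H_K(P)$, which your proposal does not address.

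The missing idea is the degree-raising substitution of Schinzel--Zannier. Setting $H:=\max\{e^e,H_{K,\mathrm{aff}}(P)\}$ and $G(T,Y):=P(T,T^{E}+Y)$ with $E\approx d_K^2d_Td_Y\log(H)/\log\log(H)$, one gets $\deg(G)\ge d_YE$ while zeros of $G$ with $t\in[B]_{\mathcal{O}_K}$ correspond bijectively to zeros of $P$ via $(t,y)\mapsto(t,y+t^{E})$ and lie in a box of size $\approx HB^{E}$. Theorem \ref{BP} applied to $G$ then gives a main factor $(HB^{E})^{1/\deg(G)}\le H^{1/(d_YE)}B^{1/d_Y}$, and the specific choice of $E$ in terms of $\log(H)/\log\log(H)$ is precisely what makes $H^{1/(d_YE)}$ a bounded power of $\log(H)$ rather than a power of $H$; the stated exponents of $d_T$, $d_Y$ and $\log H_K(P)$ all come from tracking $\deg(G)^{[4,8]}$ and $\log H_{K,\mathrm{aff}}(G)$ through this substitution. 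Without this step, or a genuine substitute for it, the proposal does not prove the theorem.
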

 
For number fields, Theorem \ref{Hilbert1} improves upon \cite[$\S$ 2.2]{Walkowiak}  and \cite[Corollary C]{Motte} by removing the $\log(B)$ factor, and lowering the exponents in the degree and the height of $P$. For function fields, Theorem \ref{Hilbert1} seems to be new, even for $K=\mathbb{F}_q(T)$. As a consequence of Theorem \ref{Hilbert1}, we give the following bound for $E_{F,K}(B)$.

\begin{theorem}
Let $K$ be a global field and let $F\in K[T,Y]$ be an irreducible  polynomial of degrees $d_T$ in $T$ and $d_Y$ in $Y$. Let $G$ be the Galois group of $F$ and let $\mathcal{H}$ be the family of subgroups of $G$. Let us suppose that $F$ is separable and monic as a polynomial in $K(T)[Y]$. Then
\begin{align*}
E_{F,K}(B) & \lesssim_{K}\sum_{H\in \mathcal{H}}2^{[25|G/H|,36|G/H|]}|G/H|^{[26,35]}d_T^{[28,37]}d_Y^{[7,11]}|H|^{[27,36]}|G|^{[28,37]}(\log(H_{K}(F))+1)^{[6,10]}B^{\frac{1}{2}} \\ & \lesssim_{K,d_T,d_Y} (\log(H_{K}(F))+1)^{[6,10]}B^{\frac{1}{2}}. 
\end{align*}
Moreover, in the case when $F$ is non monic, the second inequality still holds true.
\label{Hilbert3.5}
\end{theorem}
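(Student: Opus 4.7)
My plan is to deduce Theorem \ref{Hilbert3.5} from Theorem \ref{HIT01} via a family of Galois resolvents, one per proper subgroup $H < G$. First I would carry out the reduction to $F$ monic in $Y$: if $a(T) \in \mathcal{O}_K[T]$ denotes the leading coefficient of $F$ in $Y$, then $\tilde F(T, Y) := a(T)^{d_Y - 1} F(T, Y/a(T))$ lies in $\mathcal{O}_K[T, Y]$, is monic of the same $Y$-degree, has the same Galois group over $K(T)$, and satisfies $G_{\tilde F, t} = G_{F, t}$ whenever $a(t) \neq 0$; at most $d_T$ specializations are lost, and the new $T$-degree and height are polynomially controlled by those of $F$ and by $d_Y$. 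The explicit first inequality of the theorem I would therefore prove under the monic hypothesis, and the non-monic case for the second (qualitative) inequality follows by absorbing the resulting polynomial factors into the constant $\lesssim_{K, d_T, d_Y}$.

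For each proper $H < G$ I would construct a resolvent $R_H(T, Z) \in \mathcal{O}_K[T, Z]$ as follows. Let $\alpha_1, \dots, \alpha_{d_Y}$ be the roots of the (now monic) $F$ in its splitting field $L$ over $K(T)$; these are integral over $\mathcal{O}_K[T]$. Fix a primitive element of $L^H$ over $K(T)$ of the form
\[
\theta_H = \sum_{h \in H} h \bigl( u_1 \alpha_1 + \dots + u_{d_Y} \alpha_{d_Y} \bigr)
\]
for positive integers $u_i \leq |G|^2$ chosen generically enough that $\theta_H$ generates $L^H$, and set
\[
R_H(T, Z) := \prod_{gH \in G/H} \bigl( Z - g \theta_H \bigr).
\]
By the Galois correspondence $R_H$ is monic in $Z$ of degree $|G/H|$ and irreducible in $K(T)[Z]$, and because its coefficients are $G$-invariant polynomial expressions in the integral elements $\alpha_i$, they lie in $\mathcal{O}_K[T]$. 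Rewriting them through the elementary symmetric functions $e_j(\alpha_1, \dots, \alpha_{d_Y})$, which are up to sign the non-leading coefficients of $F$ and therefore sit in $\mathcal{O}_K[T]$ with $T$-degree $\leq d_T$, yields bounds
\[
\deg_T R_H \lesssim d_T \cdot |G| \cdot |H|, \qquad \log H_K(R_H) \lesssim (d_T d_Y |G|)^{c_0} \bigl( \log H_K(F) + 1 \bigr)
\]
for some absolute constant $c_0$.

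The Galois-theoretic input I would invoke is the classical fact that, outside the $\lesssim d_T d_Y$ values of $t$ for which $F(t, Y)$ becomes inseparable or $a(t) = 0$ (handled directly), $G_t$ is conjugate in $G$ to a subgroup of $H$ if and only if $R_H(t, Z)$ has a root in $K$; since $|G/H| \geq 2$ for $H$ proper, such a root forces $R_H(t, Z)$ to be reducible. Consequently
\[
E_{F, K}(B) \lesssim d_T d_Y + \sum_{\substack{H \in \mathcal{H} \\ H \neq G}} N_{R_H, K}(B),
\]
and substituting the bounds above into Theorem \ref{HIT01} (with $d_Y \mapsto |G/H|$, $d_T \mapsto d_T |G| |H|$, and the new height bound) gives for each $H$ a contribution of the form $2^{[25 |G/H|, 36 |G/H|]} |G/H|^{[26, 35]} (d_T |G| |H|)^{[21, 26]} (\log H_K(F) + 1)^{[6, 10]} B^{1/2}$, multiplied by additional polynomial factors in $d_T, d_Y, |G|, |H|$ coming from the inflation of the log height. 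Summing over $H \in \mathcal{H}$ and grouping the power-of-degree factors as announced yields the first inequality; the second then follows from $|G| \leq d_Y!$ and $|\mathcal{H}| \leq 2^{|G|}$. The main obstacle is obtaining a sufficiently sharp height estimate for $R_H$: a concrete choice of $\theta_H$ together with a Gelfond-type inequality for symmetric functions of the $\alpha_i$'s over a global field is what produces the final $\log H_K(R_H)$ bound with polynomial exponents in $d_T, d_Y, |G|, |H|$, and it is precisely these exponents that pin down the numerical constants $[28, 37]$, $[7, 11]$, $[27, 36]$ appearing in the statement.
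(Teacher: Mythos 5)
Your overall architecture---reduce to $F$ monic in $Y$, attach to each proper subgroup $H<G$ an irreducible monic polynomial over $K(T)$ whose root generates $L^H$, with degree and height controlled in terms of $d_T$, $d_Y$, $|G|$, $|H|$ and $H_K(F)$, observe that $G_t\neq G$ forces one of these polynomials to acquire an $\mathcal{O}_K$-root after specialization and hence to become reducible, and then sum the bound of Theorem \ref{HIT01} over $H\in\mathcal{H}$---parallels the paper's proof, and the direction of the resolvent criterion you actually use ($G_t$ contained in a conjugate of $H$ implies $g\theta_H(t)\in K$, hence $R_H(t,Z)$ reducible because it is monic of degree $|G/H|\geq 2$) does not even require separability of the specialized resolvent, so that part of the reduction is sound.

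The genuine gap is in the construction of $\theta_H$. You set $\theta_H=\sum_{h\in H}h(u_1\alpha_1+\cdots+u_{d_Y}\alpha_{d_Y})=\sum_i u_i\,\mathrm{Tr}_{L/L^H}(\alpha_i)$, so whatever integers $u_i$ you choose, $\theta_H$ lies in the span of the traces $\mathrm{Tr}_{L/L^H}(\alpha_i)$. There is no reason these traces should generate $L^H$, and in general they do not: for $F(T,Y)=Y^4-T$ over $\mathbb{Q}$, with roots $\alpha_{k+1}=i^kT^{1/4}$ and $H=\{1,\sigma\}$ the center of $G\cong D_4$ (where $\sigma$ sends $T^{1/4}\mapsto -T^{1/4}$ and fixes $i$), one computes $\mathrm{Tr}_{L/L^H}(\alpha_{k+1})=i^kT^{1/4}+i^{k+2}T^{1/4}=0$ for every $k$, so $\theta_H=0$ identically and no choice of the $u_i$ is ``generic enough''. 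When this degeneracy occurs the stabilizer of $\theta_H$ strictly contains $H$, $R_H$ is a proper power of a lower-degree polynomial (so it is not irreducible and Theorem \ref{HIT01} does not apply to it), and in the worst case $R_H(t,Z)$ is reducible for \emph{every} $t$, which destroys the count. The repair is what the paper does in Claims \ref{bound for the minimal polynomial} and \ref{bound for the intermediary fields}: first build a primitive element $\alpha_G$ of $L$ as a short integer combination of the roots, and then take the generator of $L^H$ to be a generic short integer combination of \emph{all} the elementary symmetric functions $\tau_j(\rho(\alpha_G):\rho\in H)$, $1\leq j\leq |H|$, i.e.\ of all the coefficients of $\prod_{\rho\in H}(Y-\rho(\alpha_G))$; these genuinely generate $L^H$. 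With that replacement your degree and height estimates, and the final assembly of the bound, go through along the lines you indicate.
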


In the case of number fields, Theorem \ref{Hilbert3.5} improves upon the result of \cite{Cohen} when $s=1$, whereas, in the case where $K=\mathbb{F}_q(T)$, Theorem \ref{Hilbert3.5} improves the result of \cite{BSE} when $s=1$ by removing the $\log(B)$ factor, and making explicit the dependence on the degree and the height of $F$. Once again, the novelty in the number field case lies in the effective dependence on the height of $F$, since  it was shown in \cite[$\S$ 9.2, Proposition 2]{Serre} that the set of specializations for which $G_{t}\neq G$ is an affine thin subset, and then by means of Siegel's finiteness theorem, in \cite[$\S$ 9.7]{Serre} it was shown that for number fields the number of $t\in [B]_{\mathcal{O}_K}$ lying in an affine thin subset $\Omega$ is $\lesssim_{K,\Omega}B^{\frac{1}{2}}$.

While the order of magnitude on $B$ of the bound of Theorem \ref{Hilbert3.5} is in general sharp as can be seen by considering the polynomial $F(T,Y)=Y^2-T$, in some cases one may obtain better bounds. Indeed, when $K=\mathbb{Q}$ in \cite{Castillo} Castillo and Dietmann used Galois resolvents to find an adequate primitive element for the splitting field $L$ of $F$, and use \cite[Theorem 1]{Browning0} (which is a Bombieri-Pila type of bound over $\mathbb{Q}$ for lopsided boxes) to deduce a variant of Hilbert's irreducibility theorem for Galois groups that takes into consideration the subgroup structure of the Galois group of the specialized polynomial. By means of Theorem \ref{Hilbert1} and a lemma about Galois resolvents in \cite{Castillo}, we improve and generalize \cite[Theorem 1]{Castillo} to number fields, and we improve upon Theorem \ref{Hilbert3.5} in the case when $K$ is a number field. Our result reads as follows.
  
\begin{theorem}
Let $K$ be a number field. Let $F(T,Y)\in \mathcal{O}_{K}[T,Y]$ be an irreducible polynomial of degrees $d_T$ in $T$ and $d_Y$ in $Y$. Let $G$ be the Galois group of $F$ over $K(T)$, and let $H$ be a subgroup of $G$. Then
\[\left|\left\{ t\in [B]_{\mathcal{O}_K}: \text{ the splitting field of }F(t,Y)\text{ over }K\text{ has Galois group }H \right\}\right|\lesssim_{K,d_Y,d_T}(\log(H_{K}(F))+1)^{[6,10]}B^{|G/H|^{-1}}.\]
\label{Hilbert7}
\end{theorem}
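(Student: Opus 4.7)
The plan is to follow the Castillo--Dietmann strategy from~\cite{Castillo}: use Galois resolvents to reduce the problem to counting $\mathcal{O}_K$-roots of specializations of an auxiliary irreducible polynomial of $Y$-degree $|G/H|$, and then apply Theorem~\ref{Hilbert1}. The improvement on~\cite{Castillo} and its extension to general number fields will come from the sharper bound in Theorem~\ref{Hilbert1} together with the almost-primitivization technique for polynomials in $\mathcal{O}_K[T,Y]$ developed in~\cite{PS}.

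The first step is to construct the resolvent. Let $L/K(T)$ be the splitting field of $F$, and let $M := L^{H}$ be the fixed field of $H$. Using the Galois resolvent lemma from~\cite{Castillo} (extended from $\mathbb{Q}$ to $K$), I would choose a primitive element $\theta$ of $M/K(T)$ that is integral over $\mathcal{O}_{K}[T]$ and whose $G$-conjugates are pairwise distinct. Then
\[
R_H(T,Y) := \prod_{\sigma H \,\in\, G/H} \bigl(Y - \sigma(\theta)\bigr) \;\in\; \mathcal{O}_K[T,Y]
\]
is irreducible over $K(T)$ (by transitivity of the $G$-action on $G/H$), has degree $|G/H|$ in $Y$, and has $T$-degree bounded in terms of $d_T, d_Y, |G|$. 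The key quantitative input is a height estimate of the form $\log H_K(R_H) \lesssim_{d_T,d_Y} \log H_K(F) + 1$, obtained by adapting the height analysis of~\cite{Castillo} to the $K$-relative setting with the tools of~\cite{PS}.

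The second step is the classical specialization argument: if $t \in [B]_{\mathcal{O}_K}$ is such that the Galois group of $F(t,Y)/K$ equals $H$ (as a subgroup of $G$, only defined up to $G$-conjugation), then the specialization of $\theta$ at $T=t$ is fixed by $G_t$, and therefore $R_{H'}(t,Y)$ has a root in $\overline{K}$ for some $G$-conjugate $H'$ of $H$; integrality of $\theta$ forces this root to lie in $\mathcal{O}_K$. Applying Theorem~\ref{Hilbert1} to each such $R_{H'}$ (whose degrees and $K$-relative height coincide with those of $R_H$) yields
\[
\lesssim_{K} |G/H|^{[12,16]} \, d_T(R_H)^{[5,9]} \, \bigl(\log H_K(R_H) + 1\bigr)^{[6,10]} \, B^{1/|G/H|},
\]
and summing over the at most $|G|$ conjugates of $H$ (a factor absorbed into $\lesssim_{K,d_T,d_Y}$) delivers the claimed exponent $|G/H|^{-1}$ on $B$ and the log factor of exponent $[6,10]$.

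The main obstacle I expect is the extension of the resolvent height bound from $\mathbb{Q}$ to an arbitrary number field $K$. Over $\mathbb{Q}$ one writes $\theta$ explicitly as an integer linear combination of roots of $F$ and bounds the symmetric functions in its conjugates directly, but over a number field one must work with the $K$-relative height and carefully control denominators when $\mathcal{O}_K$ is not a principal ideal domain; this is precisely the kind of difficulty that the almost-primitivity technique of~\cite{PS} is designed to resolve, so I would invoke it to clear denominators and produce a genuinely $\mathcal{O}_K[T,Y]$-model of $R_H$ whose $K$-relative height is controlled polynomially by $H_K(F)$.
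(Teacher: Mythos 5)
Your proposal is correct and rests on the same basic reduction as the paper: manufacture a Galois resolvent whose specialization at $t$ acquires an $\mathcal{O}_K$-root whenever $\mathrm{Gal}(F(t,Y)/K)$ is (conjugate to) $H$, and then invoke Theorem~\ref{Hilbert1}. The difference is in the choice of resolvent. The paper imports $\Phi_{F,H}(Z,T)$ from \cite[Lemma 4]{Castillo} verbatim --- a product over cosets of $H$ in the full symmetric group $S_{d_Y}$, hence of $Z$-degree $|S_{d_Y}/H|$ and generally reducible --- factors it over $K$, notes that each irreducible factor is monic with $\mathcal{O}_K$-coefficients and has $Z$-degree at least $|G/H|$, and applies Theorem~\ref{Hilbert1} to each factor. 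You instead construct the irreducible $G$-resolvent directly as the minimal polynomial $R_H$ of a primitive element of $L^{H}$, of $Y$-degree exactly $|G/H|$; the degree and height control you flag as the main obstacle ($\deg(R_H)\lesssim d_T|H||G|$ and $\log H_{K,\mathrm{aff}}(R_H)\lesssim_{d_T,d_Y}\log H_K(F)+1$) is precisely what the paper establishes in Claim~\ref{bound for the intermediary fields} within the proof of Theorem~\ref{Hilbert3.5}, so your route trades the extension of \cite[Lemma 4]{Castillo} to number fields for a height analysis that is already available in the paper. Two small points to tidy up: first reduce to $F$ monic in $Y$ (as in the proof of Theorem~\ref{HIT01}) so that the roots $y_i$, and hence $\theta$, are integral over $\mathcal{O}_K[T]$ and $R_H\in\mathcal{O}_K[T,Y]$; and discard the $\lesssim_{d_T,d_Y}1$ values of $t$ with $\Delta(t)=0$, since the identification of $\mathrm{Gal}(F(t,Y)/K)$ with a decomposition subgroup of $G$ --- which is what makes ``the specialization of a suitable conjugate of $\theta$ is fixed by $G_t$'' literally true --- requires the specialized polynomial to be separable. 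Also note that your sum over $G$-conjugates $H'$ of $H$ is redundant, since every conjugate produces the same polynomial $R_H$.
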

As a corollary of Theorem \ref{Hilbert7} we also obtain the following result.

\begin{theorem}
Let $K$ be a number field, and let $F(T,Y)\in \mathcal{O}_{K}[T,Y]$ be an irreducible polynomial of degrees $d_T$ in $T$ and $d_Y$ in $Y$. Let $G$ be the Galois group of $F(T,Y)$ over $K(T)$. If $\delta_{G}:=\max\{|G/H|^{-1}:H\text{ is a proper subgroup of }G\}$ and $\gamma_{G}:=\max\{|G/H|^{-1}:H\text{ is an intransitive subgroup of }G\}$, then
\[E_{F,K}(B)\lesssim_{K,d_Y,d_T}(\log(H_{K}(F))+1)^{[6,10]}B^{\delta_G},\]
and
\[N_{F,K}(B)\lesssim_{K,d_Y,d_T}(\log(H_{K}(F))+1)^{[6,10]}B^{\gamma_G}.\]
\label{HIT3}
\end{theorem}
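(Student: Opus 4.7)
The plan is to deduce both bounds from Theorem~\ref{Hilbert7} by stratifying the counting sets according to the Galois group $G_t$ of the specialized polynomial $F(t,Y)$ and summing the corresponding estimates over appropriate families of subgroups of $G$. In both cases the total number of subgroups of $G$ is at most $2^{|G|}\leq 2^{d_Y!}$, a quantity that will be absorbed by the implicit constant depending on $d_Y$.

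For $E_{F,K}(B)$, I would write
\[
E_{F,K}(B) \leq \sum_{H\lneq G}\bigl|\{t\in[B]_{\mathcal{O}_K}: G_t=H\}\bigr|,
\]
where $H$ ranges over the proper subgroups of $G$. By Theorem~\ref{Hilbert7} each term is $\lesssim_{K,d_T,d_Y} (\log(H_K(F))+1)^{[6,10]}B^{|G/H|^{-1}}$, and since $|G/H|^{-1}\leq\delta_G$ for every proper $H\leq G$, summing over the bounded number of subgroups yields the first estimate.

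For $N_{F,K}(B)$, the key observation is the classical correspondence between the orbits of the Galois group $G_t$ on the roots of $F(t,Y)$ and the irreducible factors of $F(t,Y)$ in $K[Y]$: the specialization is reducible precisely when $G_t$ acts intransitively on the set of roots. Via the specialization embedding $G_t\hookrightarrow G$ (well-defined up to conjugation and valid for all $t$ outside a finite set of size $O_{d_T,d_Y}(1)$ where, for instance, the leading coefficient or the discriminant of $F(T,Y)$ vanishes), the action of $G_t$ on the roots of $F(t,Y)$ agrees with the restriction to $G_t$ of the natural action of $G$ on the roots of $F$. Hence
\[
N_{F,K}(B) \leq \sum_{\substack{H\leq G\\ H\text{ intransitive}}}\bigl|\{t\in[B]_{\mathcal{O}_K}: G_t=H\}\bigr|+O_{d_T,d_Y}(1),
\]
and applying Theorem~\ref{Hilbert7} to each term using $|G/H|^{-1}\leq\gamma_G$ gives the second bound. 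The only technical subtlety is the compatibility between the abstract identification of $G_t$ as a subgroup of $G$ and the concrete action on roots, so that intransitivity detected via factorization of $F(t,Y)$ matches intransitivity as a subgroup of $G\leq S_{d_Y}$; this is standard in the theory of specialization of Galois groups, and the contribution of the exceptional $t$ is absorbed by the implicit constant.
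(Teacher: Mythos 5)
Your proposal is correct and follows essentially the same route as the paper: the paper likewise deduces both bounds by applying Theorem \ref{Hilbert7} to the (boundedly many) relevant subgroups of $G$ --- proper ones for $E_{F,K}(B)$, intransitive ones for $N_{F,K}(B)$ via the orbit--factor correspondence --- and absorbs the finitely many exceptional $t$ with $g_0(t)=0$ or $\Delta(t)=0$ into the implicit constant. Your write-up simply makes explicit the stratification and the compatibility of $G_t\hookrightarrow G$ with the action on roots, which the paper leaves implicit.
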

This bound is new in the case when $K\neq \mathbb{Q}$, whereas when $K=\mathbb{Q}$, this improves the bound given  in \cite{Castillo}. 

Before ending this introduction, we must mention that the dependence on $K$ of the implicit constants when $K$ is a  function field can be made explicit in the degree and genus of the field in all of the bounds, whereas for number fields this dependence is explicit in the degree and the discriminant only if one assumes the Generalized Riemann Hypothesis. This is because the dependence on $K$ in this article relies on the dependence on $K$ in the bounds obtained in \cite{PS}, which in turn depend on estimates for the number of primes ideals of bounded norm. In the function field case this is covered by the Riemann Hypothesis for curves over finite fields, while in the number field case this is covered by the Landau prime ideal theorem, for which explicit versions are only known under the Generalized Riemann Hypothesis (see \cite[Remark 2.7]{PS}).  

\vskip0.5cm

\noindent\textbf{Acknowledgments.} M. Paredes was supported in part by a CONICET Postdoctoral Fellowship. R. Sasyk was supported in part through the grant PICT 2017-0883. We thank the anonymous referee for the careful reading of the manuscript and his or her many suggestions.

\section{Heights in global fields}
\label{alturas}

We use the asymptotic notation $X=O(Y)$ or $X\lesssim Y$ to mean $|X|\leq C|Y|$ for some constant $C$. We also use $O_{K,n,d}(Y)$ or $\lesssim_{K,n,d}Y$ to mean that the implicit constants depend on $K,n$ and $d$.

Throughout this paper, $K$ denotes a global field, i.e. a finite separable extension of $\mathbb Q$ or $\mathbb{F}_{q}(T)$, in  which case we further assume that the field of constants is $\mathbb{F}_{q}$. We will denote by $d_{K}$ the degree of the extension  $K/ \mathbbm{k}$,  where $\mathbbm{k}$ indistinctively denotes the base fields  $\mathbb{Q}$  or  $\mathbb{F}_{q}(T)$.

Let $K$ be a number field and let $\mathcal{O}_{K}$ be its ring of integers. Then each embedding $\sigma:K\hookrightarrow \mathbb{C}$ induces a place $v$, by means of the equation
\begin{equation}
|x|_{v}:=|\sigma(x)|^{\frac{n_{v}}{d_{K}}}_{\infty},
\label{definition of arq places}
\end{equation}
where $|\cdot |_{\infty}$ denotes the absolute value of $\mathbb{R}$ or $\mathbb{C}$ and $n_{v}=1$ or $2$, respectively. 
Such places will be called the places at infinity, and will be denoted by $M_{K,\infty}$. Note that $\sum_{v\in M_{K,\infty}}n_{v}=d_{K}$. They are all the archimedean places of $K$. Since the complex embeddings come in pairs that differ by complex conjugation, we have $|M_{K,\infty}|\leq d_{K}$.

Now, let us suppose that $K$ is a function field over $\mathbb{F}_{q}$, such that $\mathbb{F}_{q}$ is algebraically closed in $K$ (in other words, the constant field of $K$ is $\mathbb{F}_{q}$). A prime in $K$ is, by definition, a discrete valuation ring $\mathcal{O}_{(\mathfrak{p})}$ with maximal ideal $\mathfrak{p}$ such that $\mathbb{F}_{q}\subseteq \mathfrak{p}$ and the quotient field of $\mathcal{O}_{(\mathfrak{p})}$ equals to $K$. By abuse of notation, when we refer to a prime in $K$, we will refer to the maximal ideal $\mathfrak{p}$. Associated to $\mathfrak{p}$, we have the usual $\mathfrak{p}$-adic valuation, that we will denote by $\text{ord}_{\mathfrak{p}}$. The degree of $\mathfrak{p}$, denoted by $\deg(\mathfrak{p})$ will be the dimension of $\mathcal{O}_{(\mathfrak{p})}/\mathfrak{p}$ as an $\mathbb{F}_{q}$-vector space, which is finite. Then the norm of $\mathfrak{p}$ is defined as $\mathcal{N}_{K}(\mathfrak{p}):=q^{\deg(\mathfrak{p})}$. 
Any prime $\mathfrak{p}$ of $K$ induces a place $v$ in $K$ by the equation
\[|x|_{v}:=|x|_{\mathfrak{p}}:=\mathcal{N}_{K}(\mathfrak{p})^{-\frac{\text{ord}_{\mathfrak{p}}(x)}{d_{K}}}.\]
They are all the places in $K$. The set of all places in $K$ is denoted by $M_{K}$. Now we fix an arbitrary place $v_{\infty}$ in $M_{K}$ above the place in $\mathbb{F}_{q}(T)$ defined by $\left |\frac{f}{g}\right |_{\infty}:=q^{\deg(f)-\deg(g)}$. Its corresponding prime will be denoted $\mathfrak{p}_{\infty}$; it has degree at most $d_{K}$. The ring of integers of $K$ is the subset
\[\mathcal{O}_{K}:=\left\{ x\in K: |x|_{v}\leq 1\text{ for all }v\in M_{K},v\neq  v_{\infty}\right\}.\]

If $K$ is a number field with ring of integers $\mathcal{O}_{K}$, we define
\[[B]_{\mathcal{O}_{K}}:=\{x\in \mathcal{O}_{K}:\max_{\sigma:K\hookrightarrow \mathbb{C}}|\sigma(x)|_{\infty}\leq B^{\frac{1}{d_{K}}}\}=\{x\in \mathcal{O}_{K}:\max_{v\in M_{K,\infty}}|x|_{v}\leq B^{\frac{n_v}{d_{K}^2}}\}.\]
When $K$ is a function field, we define
\[[B]_{\mathcal{O}_{K}}:=\{x\in \mathcal{O}_{K}:|x|_{v_{\infty}}\leq B^{\frac{1}{d_{K}}}\}.\]

We will use a notion of height for a polynomial that is defined in \cite[$\S$ 1.6]{Bombieri}. Specifically, given a global field $K$ of degree $d_{K}$ and a place $v\in M_{K}$, for $F=\sum_{I}a_{I}\boldsymbol Y^{I}\in K[Y_{1},\ldots, Y_{n}]$, we define
\[|F|_{v}:=\max_{I} |a_{I}|_{v}.\]
Following \cite[$\S$ 1.6]{Bombieri}, the $K$-relative height of $F\in K[Y_{1},\ldots ,Y_{n}]$ is defined as 
\[H_{K}(F):=\left(\prod_{v\in M_{K}}|F|_{v}\right)^{d_{K}}.\]
We will also use the $K$-affine relative height
\[H_{K,\text{aff}}(F):=\left(\prod_{v\in M_{K}}\max\{1,|F|_{v}\}\right)^{d_{K}}.\]
In particular,
\[H_{K}(F)\leq H_{K,\text{aff}}(F).\]

We will use the following property concerning the height of polynomials, which is a consequence of \cite[Proposition 2.2]{PS}, that will allow us to deal with polynomials defined in  global fields with non-principal ring of integers.
\begin{lemma}
Let $K$ be a global field. There exists a positive constant $c_{1}=c_{1}(K)$ with the property that for any $F\in K[Y_{1},\ldots, Y_{n}]$ there is $\lambda\in K^{\times}$ such that $\lambda F\in \mathcal{O}_{K}[Y_{1},\ldots, Y_{n}]$, and for all $v\in M_{K,\infty}$ it holds  
\begin{equation}
|\lambda F|_{v}\leq \begin{cases}c_{1}H_{K}(F)^{\frac{n_{v}}{d_{K}^{2}}} & \text{ if }K \text{ is a number field}, \\ c_{1}H_{K}(F)^{\frac{1}{d_{K}}} & \text{ if }K \text{ is a function field}.\end{cases}
\label{serrebound}
\end{equation}
In particular, $H_{K,\emph{\text{aff}}}(\lambda F)\lesssim_K H_K(F)$.
\label{serre}
\end{lemma}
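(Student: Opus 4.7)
The plan is to invoke Proposition 2.2 of \cite{PS}, a Minkowski/Riemann--Roch-style result tailored to this situation, and unwind what it says for a polynomial. Let $(a_I)$ be the coefficients of $F$, not all zero, and let $\mathfrak c(F) := \sum_I a_I\mathcal O_K$ denote the content of $F$, which is a fractional ideal of $\mathcal O_K$. The condition $\lambda F \in \mathcal O_K[Y_1,\dots,Y_n]$ amounts to $\lambda a_I \in \mathcal O_K$ for every $I$, i.e.\ $\lambda \in \mathfrak c(F)^{-1}$. At each finite place $v$ this translates into $|\lambda|_v \leq |F|_v^{-1}$, which automatically gives $|\lambda F|_v \leq 1$. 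The product formula then forces $\prod_{v\in M_{K,\infty}}|\lambda|_v \geq \prod_{v\notin M_{K,\infty}}|F|_v$, so the genuine content of the lemma is to find $\lambda$ coming close to saturating this lower bound while distributing the archimedean mass correctly among the infinite places.

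This is exactly what Proposition 2.2 of \cite{PS} delivers: applied to the fractional ideal $\mathfrak c(F)^{-1}$, it produces a nonzero $\lambda \in \mathfrak c(F)^{-1}$ satisfying, in the number field case, the per-place bound $|\lambda|_v \leq c_1(K)\,|F|_v^{-1}\,H_K(F)^{n_v/d_K^2}$ for every $v \in M_{K,\infty}$, and in the function field case the corresponding bound $|\lambda|_{v_\infty}\leq c_1(K)\,|F|_{v_\infty}^{-1}\,H_K(F)^{1/d_K}$ at the distinguished place; multiplying through by $|F|_v$ yields \eqref{serrebound}. Here $c_1(K)$ is the Minkowski/Riemann--Roch constant coming from the discriminant (resp.\ genus) of $K$, and is independent of $F$.

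For the ``in particular'' statement, since $|\lambda F|_v\leq 1$ at every finite place, only the archimedean places contribute to $H_{K,\text{aff}}(\lambda F)$. Using $\sum_{v \in M_{K,\infty}} n_v = d_K$ in the number field case (and the single-place version in the function field case), taking the product of the archimedean bounds gives $H_{K,\text{aff}}(\lambda F) \leq c_1^{d_K|M_{K,\infty}|}\,H_K(F) \lesssim_K H_K(F)$. The only substantive obstacle is encapsulated in Proposition 2.2 of \cite{PS}, namely the production of an element of the prescribed fractional ideal whose archimedean absolute values are simultaneously controlled at every infinite place; once that tool is available, the reduction above is essentially bookkeeping.
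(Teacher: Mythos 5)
Your proposal is correct and follows the same route as the paper: the lemma is stated there precisely as a consequence of \cite[Proposition 2.2]{PS}, with no further argument given, and your reduction (identifying the condition $\lambda F\in\mathcal O_K[Y_1,\dots,Y_n]$ with $\lambda\in\mathfrak c(F)^{-1}$, applying the Minkowski/Riemann--Roch-type statement to that fractional ideal, and multiplying back by $|F|_v$) is exactly the bookkeeping the citation is meant to encapsulate. The deduction of $H_{K,\text{aff}}(\lambda F)\lesssim_K H_K(F)$ from $\sum_{v\in M_{K,\infty}}n_v=d_K$ and $|\lambda F|_v\leq 1$ at finite places is also the intended one.
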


We will use the following standard estimate of Liouville (or perhaps  Cauchy). Let $K$ be a field with  an absolute value $|\cdot|$. Let $P=a_0+a_1Y+\cdots +a_dY^d\in K[Y]$. of degree $d\geq 0$. Let $y$ be a root of $P$ over an algebraic closure $\overline{K}$ of $K$. Let us suppose that $|\cdot |$ also denotes an extension of the absolute value to $K(y)$. Then
\begin{equation}
|y|\leq \begin{cases}1+\frac{\max_i|a_i|}{|a_d|} & \text{ if }|\cdot |\text{ is archimedean}\\ \frac{\max_i|a_i|}{|a_d|} & \text{ if }|\cdot | \text{ is non-archimedean}\end{cases}.
\label{liouville}
\end{equation}
Indeed, If $|\cdot|$ is archimedean, then for any $|y|\geq 1$ it holds the inequality
\[|a_d ||y|^d=\left|\sum_{i=0}^{d-1}a_i y^i\right|\leq \sum_{i=0}^{d-1}|a_i||y|^i\leq \max|a_i|\sum_{i=0}^{d-1}|y|^i=\max|a_i| \frac{|y|^d-1}{|y|-1}\leq \max|a_i|\frac{|y|^d}{|y|-1},\]
from where the bound follows at once. On the other hand, If $|\cdot |$ is non-archimedean,  \eqref{liouville} follows by replacing the triangle inequality by the ultrametric inequality.

\section{Bounds for the number of integral roots of the specialized polynomial}
\label{section 3}

As with many proofs of Hilbert's irreducibility theorem,  we are going to reduce the problem to that of bounding the number of $t\in [B]_{\mathcal{O}_{K}}$  for which $P(t,Y)$ has an $\mathcal{O}_{K}$-root, with $P(T,Y)\in \mathcal{O}_{K}[T,Y]$ an adequate irreducible polynomial of degree $d_{T}$ in $T$. In order to bound these specializations, we note that for any such $t$ we can bound the size of any root $y\in \mathcal{O}_{K}$ of $P(t,y)=0$. More precisely, Liouville's inequality \eqref{liouville} gives $y\in [cB^{d_{T}}]_{\mathcal{O}_K}$ for some constant $c$ depending on the degree and height of $P$. Thus, the number of $t$'s for which $P(t,Y)$ has an $\mathcal{O}_{K}$-root is contained in the subset of  $(t,y)\in [cB^{d_{T}}]_{\mathcal{O}_{K}}^{2}$. Then, as in \cite{SZ,Walkowiak} we bound the cardinal of this subset by means of a Bombieri-Pila  type of bound. The difference in our proof is that we are going to use a Bombieri-Pila type of bound that we obtained  in \cite{PS}, which is valid for global fields and gives bounds with the $\log(B)$ term removed, and that we are going to use Lemma \ref{serre} which  allows us to deal with polynomials defined in global fields with non-principal ring of integers. 

As in \cite[Definition 3.22]{PS}, if $c_{1}$ is the constant in Lemma \ref{serre} we let
\[\beta:=\begin{cases}27d^{4} & \text{ if char}(K)=0,\\ d^{\frac{14}{3}} & \text{ if }0<\text{char}(K)\leq \max\{27d^{4},c_{1}\},\\ 1 & \text{ if char}(K)>\max\{27d^{4},c_{1}\}.\end{cases}\] 
Given $P\in \mathcal{O}_{K}[T,Y]$ of total degree $d$ we define $b(P):=0$ if $P$ is not absolutely irreducible, otherwise we let
\[b(P):=\prod_{\mathfrak{p}\in \mathcal{P}_{P}}\exp\left(\frac{\log(\mathcal{N}_{K}(\mathfrak{p}))}{\mathcal{N}_{K}(\mathfrak{p})}\right),\]
where
\[\mathcal{P}_{P}:=\{\mathfrak{p}\notin M_{K,\infty}:\mathcal{N}_{K}(\mathfrak{p})>\max\{\beta,c_{1}\}\text{ and }P\text{ mod }\mathfrak{p}\text{ is not absolutely irreducible}\}.\]
When $P$ is absolutely irreducible, \cite[Lemma 3.23]{PS} gives the bound
\begin{equation}
b(P)\lesssim_{K,n}\max\left\{d^{[-2,2]}\log(H_{K,\text{aff}}(P)),1\right\}.
\label{b(f)}
\end{equation}
We are now in position to state the Bombieri-Pila type of bound that we have obtained in \cite{PS}.

\begin{theorem}[{\cite[Corollary 5.15]{PS}}]
Let $K$ be a global field of degree $d_{K}$. Let $P\in \mathcal{O}_{K}[T,Y]$ of total degree $d$ be irreducible, and let $P_{d}$ be its homogeneous part of total degree $d$. Then for any $B\geq 1$, it holds
\begin{align*}
\left|\left\{(t,y)\in [B]_{\mathcal{O}_{K}}^{2}:P(t,y)=0\right\}\right| & \lesssim_{K}B^{\frac{1}{d}}\frac{\min\{d^{[2,6]}\log(H_{K}(P_{d}))+d^{[3,7]}\log(B)+d^{[4,8]},d^{[4,\frac{14}{3}]}b(P)\}}{H_{K}(P_{d})^{\frac{1}{d^2}}}+d\log(B)+d^{[4,8]}\\ & \lesssim_{K}d^{[4,8]}(\log(H_{K,\text{\emph{aff}}}(P))+1)B^{\frac{1}{d}}.
\end{align*}
\label{BP}
\end{theorem}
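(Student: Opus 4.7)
The plan is to invoke the global determinant method of Salberger--Walsh, as developed in \cite{PS}, and to execute it uniformly in the global field $K$. The starting point is to sort the integer solutions $(t,y)\in [B]_{\mathcal{O}_K}^2$ of $P=0$ according to their reductions modulo prime ideals $\mathfrak{p}$ of $\mathcal{O}_K$ whose norms lie in an optimally chosen window of size comparable to $B^{1/d}$. For each residue class one approximates $\{P=0\}$ $\mathfrak{p}$-adically by a truncated Taylor series, forcing the points in that class to satisfy a family of linear conditions on monomials of degree at most $E$, where $E$ is an integer parameter to be fixed. Aggregating these conditions across all primes in the chosen window, and applying a Siegel-type argument over $\mathcal{O}_K$-lattices, one produces an auxiliary polynomial $Q\in \mathcal{O}_K[T,Y]$ of total degree $\lesssim E$ which vanishes on every integer point of $\{P=0\}$ under consideration.

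Two global-field inputs drive the estimate. First, to ensure that enough primes are usable, one needs a lower bound on the count of primes $\mathfrak{p}$ of bounded norm at which $P \bmod \mathfrak{p}$ is absolutely irreducible and the curve behaves generically; here I would use Lang--Weil estimates together with the prime-counting inputs (the prime ideal theorem in characteristic zero and the Riemann hypothesis for curves in positive characteristic), and the loss from bad primes is exactly packaged by the quantity $b(P)$ with threshold $\beta$. The bound \eqref{b(f)} for absolutely irreducible $P$ is what ultimately permits the replacement of the inner minimum by the logarithmic expression in the second line of the statement. Second, to measure the height of the auxiliary polynomial $Q$ produced by the geometry of numbers over $\mathcal{O}_K$, one needs a clean translation between ideal-theoretic and archimedean data; for non-principal $\mathcal{O}_K$ this is secured by Lemma \ref{serre}, which allows one to normalise $P$ so that $H_{K,\mathrm{aff}}(P)$ controls its archimedean size up to a $K$-dependent constant.

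Given $Q$, and since $P$ is irreducible over $K$, Bezout's theorem in $\mathbb{P}^2_K$ applied to $P$ and $Q$ bounds the number of common zeros by $d\cdot \deg(Q)\lesssim dE$; balancing $E$ against $B$ and the prime window produces the main term $B^{1/d}$ with the claimed polynomial dependence on $d$, $\log H_K(P_d)$, and $\log B$. The normalisation factor $H_K(P_d)^{-1/d^2}$ arises naturally because the determinant estimate compares the leading monomial of $P_d$ with the covolume of the $\mathcal{O}_K$-lattice spanned by monomial evaluations at integer points, while the additive remainder $d\log(B)+d^{[4,8]}$ collects contributions from points on the exceptional fibres (points at infinity and points lying over primes where one loses $\mathfrak{p}$-adic control). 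The hardest step, in my view, is carrying out the Siegel-type construction of $Q$ uniformly in number field and function field characteristic with the explicit polynomial-in-$d$ constants that appear in the statement: the archimedean and non-archimedean bookkeeping must be executed in parallel, and for non-principal $\mathcal{O}_K$ the product formula requires delicate manipulation --- precisely the role assigned to Lemma \ref{serre}.
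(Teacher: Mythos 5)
This statement is not proved in the paper at all: it is imported verbatim as \cite[Corollary 5.15]{PS}, so there is no internal argument to measure your proposal against --- the paper's ``proof'' is a citation. Your sketch is a reasonable reconstruction of the kind of argument that lives in \cite{PS}: a determinant method over global fields, with Lang--Weil plus prime-counting inputs supplying usable primes, $b(P)$ packaging the loss at primes where $P$ fails to be absolutely irreducible, the $H_K(P_d)^{-1/d^2}$ saving coming from the leading form, and B\'ezout closing the argument. To that extent you have identified the right circle of ideas.

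As a proof, however, there is a genuine gap, and the variant of the method you describe would not yield the bound as stated. You propose to sort the points into residue classes modulo primes in a window of norm about $B^{1/d}$ and to build one auxiliary polynomial per class via a Siegel-type construction; that is the Heath-Brown-style local ($\mathfrak{p}$-adic) determinant method, and it is precisely the version that reintroduces the $\log B$ (and $\varepsilon$) losses that the second displayed inequality does not have. The clean $B^{1/d}$ with the $H_K(P_d)^{-1/d^2}$ saving, and with $b(P)=\prod_{\mathfrak{p}\in\mathcal{P}_P}\exp\left(\log\mathcal{N}_K(\mathfrak{p})/\mathcal{N}_K(\mathfrak{p})\right)$ entering multiplicatively over \emph{all} bad primes, is the signature of the Salberger--Walsh \emph{global} determinant method: one forms a single determinant of monomial evaluations at all the points under consideration, lower-bounds its $\mathfrak{p}$-adic valuation at every prime simultaneously (no partition into congruence classes), and compares with the archimedean upper bound; the auxiliary polynomial arises from the forced vanishing of that determinant, not from Siegel's lemma. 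Beyond this structural issue, none of the explicit exponents $d^{[2,6]}$, $d^{[3,7]}$, $d^{[4,8]}$, $d^{[4,\frac{14}{3}]}$ is derived, and you yourself flag the uniform archimedean/non-archimedean bookkeeping as the hardest step without carrying it out; so what you have is a plausible plan for reproving \cite[Corollary 5.15]{PS}, not a proof of it.
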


With this at hand we can now prove Theorem \ref{Hilbert1} of the Introduction.

\begin{proof}[Proof of Theorem \ref{Hilbert1}]
By Lemma \ref{serre}, after multiplying by an adequate non-zero constant, we may suppose that $P$ verifies the bound \eqref{serrebound} with $\lambda=1$. Let us write $P(T,Y)=a_{0}(T)Y^{d_Y}+\cdots +a_{d_Y}(T)$. Let $t\in [B]_{\mathcal{O}_{K}}$ and let $y\in \mathcal{O}_K$ be a root of $P(t,Y)$. By Liouville's inequality \eqref{liouville},  $|y|_v\leq 2\max_i |a_i(t)|_v$ for all $v\in M_{K,\infty}$. Then \eqref{serrebound} implies that 
\begin{equation}
|y|_{v}\leq 2\max_{i}|a_{i}(t)|_{v}\leq 2|d_{T}+1|_{v}^{[1,0]}|P|_{v}|t|_{v}^{d_{T}}\leq 2c_{1}|d_{T}+1|_{v}^{[1,0]}H_{K}(P)^{[\frac{n_{v}}{d_{K}^{2}},\frac{1}{d_K}]}|t|_{v}^{d_{T}}\text{ for all }v\in M_{K,\infty}.
\label{liouville}
\end{equation}
It follows that $y\in [(2c_{1})^{d_{K}^2}(d_{T}+1)^{[d_{K},0]}H_{K}(P)B^{d_{T}}]_{\mathcal{O}_{K}}$. Thus, its enough to bound the number of $2$-tuples $(t,y)$ 
lying in the box $[(2c_{1})^{d_{K}^2}(d_{T}+1)^{[d_{K},0]}H_{K}(P)B^{d_{T}}]_{\mathcal{O}_{K}}^{2}$ which are zeroes of $P(T,Y)$. 
Since Theorem \ref{BP} gives a bound of size $O_{K,P}(B^{\frac{d_{T}}{d}})$,
 we will distinguish between the cases when $d$ is large and when $d$ is small as in \cite{SZ}. To that end, let $H:=\max\{e^{e},H_{K,\text{aff}}(P)\}$, let $L_{1}:=\log(H)$ and let $L_{2}:=\log(\log(H))$. Since $H\geq e^{e}$, it holds $L_{2}\geq 1$.

 Let us suppose first that $d\geq d_{K}^2d_{T}d_{Y}\frac{L_{1}}{L_{2}}$. 
 Then $(d_{T}+1)^{\frac{d_K}{d}}\leq 2$, $H^{\frac{1}{d}}\leq \log(H)^{\frac{1}{d_{K}^2d_{T}d_{Y}}}$ and $B^{\frac{d_T}{d}}\leq B^{\frac{1}{d_{Y}}} $.
By Theorem \ref{BP} applied to  $P$ in the box $[(2c_{1})^{d_{K}^2}(d_{T}+1)^{[d_{K},0]}HB^{d_{T}}]_{\mathcal{O}_{K}}^{2}$ we conclude that the number of $t\in [B]_{\mathcal{O}_{K}}$ for which $P(t,Y)=0$ has a solution in $\mathcal{O}_{K}$ is bounded by 
\begin{equation}
\lesssim_{K}d^{[4,8]}(\log(H_{K,\text{aff}}(P))+1)\left((2c_{1})^{d_{K}^2}(d_{T}+1)^{[d_{K},0]}HB^{d_{T}}\right)^{\frac{1}{d}}\lesssim_{K}d^{[4,8]}((\log(H))^{\frac{1}{d_{K}^2d_{T}d_{Y}}+1}+1)B^{\frac{1}{d_{Y}}}.
\label{bound01}
\end{equation} 

Let us suppose now that  $d<d_{K}^2d_{T}d_{Y}\frac{L_{1}}{L_{2}}$ and let us consider the polynomial $G(T,Y):=P(T,T^{E}+Y)$ where $E=\left\lfloor d_{K}^2d_{T}d_{Y}\frac{L_{1}}{L_{2}}\right\rfloor+1\in [d_{K}^2d_{T}d_{Y}\frac{L_{1}}{L_{2}},2d_{K}^2d_{T}d_{Y}L_{1}]$. Then  
\begin{equation*}
d_{Y}E\leq \deg(G)\leq d_{Y}E+d_{T}\leq 3d_{K}^2d_{Y}^{2}d_{T}L_{1},\text{ and }\log(H_{K,\text{aff}}(G))\lesssim_K [d_Y,1]+\log(H_{K,\text{aff}}(P))\lesssim_K [d_Y,1]+\log(H).
\end{equation*} 
Note that every zero $(t,y)\in \mathcal{O}_{K}^{2}$ of $G$ corresponds to a zero of $P$ of the form $(t,y+t^{E})$. Then \eqref{liouville} implies that any zero $(t,y)$ of $G$ with $t\in [B]_{\mathcal{O}_{K}}$ and $y\in \mathcal{O}_{K}$ verifies
\[|y|_{v}=|-t^{E}+(y+t^E)|_{v}\leq |t|_{v}^{E}+ 2c_{1}|d_{T}+1|_{v}^{[1,0]}H_{K}(P)^{[\frac{n_{v}}{d_{K}^{2}},\frac{1}{d_K}]}|t|_{v}^{d_{T}}\text{ for all }v\in M_{K,\infty}.\]
Then it follows that $y\in [(3c_{1})^{d_{K}^2}(d_{T}+1)^{[d_{K},0]}H_K(P)B^{E}]_{\mathcal{O}_{K}}$. Hence, by Theorem \ref{BP} applied to $G$ in the box $[(3c_{1})^{d_{K}^2}(d_{T}+1)^{[d_{K},0]}HB^{E}]_{\mathcal{O}_{K}}^{2}$ the number of $t\in [B]_{\mathcal{O}_{K}}$ for which $P(t,Y)=0$ has a solution in $\mathcal{O}_{K}$ is bounded by
\begin{align}
& \lesssim_{K}\deg(G)^{[4,8]}\left(\log(H_{K,\text{aff}}(G))+1\right)\left((3c_{1})^{d_{K}}(d_{T}+1)^{[d_{K},0]}HB^{E}\right)^{\frac{1}{\deg(G)}} \nonumber \\  & \lesssim_{K}d_{Y}^{[8,16]}d_{T}^{[4,8]}([d_Y,1]+\log(H))^{[4,8]+1}H^{\frac{L_{2}}{d_{K}^2d_{T}d_{Y}L_{1}}}B^{\frac{1}{d_{Y}}} \nonumber\\ & \lesssim_{K}d_{Y}^{[12,16]}d_{T}^{[4,8]}(\log(H))^{[5,9]} (\log(H))^{\frac{1}{d_{K}^2d_{T}d_{Y}}}B^{\frac{1}{d_{Y}}}\lesssim_{K}d_{Y}^{[12,16]}d_{T}^{[4,8]}(\log(H))^{[5,9]+\frac{1}{d_{K}^2d_{T}d_{Y}}}B^{\frac{1}{d_{Y}}}
\label{bound02}
\end{align} 
Since there are at most $d_T$ values of $t$ for which $a_{0}(t)=0$, from \eqref{bound01} and \eqref{bound02} we deduce the bound stated in Theorem \ref{Hilbert1} written in terms of $H_{K,\text{aff}}(P)$. By Lemma \ref{serre} we recover Theorem \ref{Hilbert1} in terms of $H_{K}(P)$.
\end{proof}

\section{Effective Hilbert's irreducibility theorem}
\label{HIT1}

The goal of this section is to prove Theorem \ref{HIT01} from the Introduction. The idea of the proof is as follows. The irreducibility of a polynomial $F\in K[T,Y]$ implies that for any $t\in \mathcal{O}_{K}$ such that $F(t,Y)$ is reducible over $K$, any non-trivial factorization of $F(t,Y)$ over $K$ gives an irreducible polynomial over $K$ which comes from specialization by $t$ of a non-trivial factor $G(T,Y)$ of $F(T,Y)$ over $\overline{K(T)}$, an algebraic closure of $K(T)$. From the fact that $G$ does not lie in $K[T]$ but $G(t,Y)$ does lie in $K[T]$ it follows that some coefficient of $G$ does not lie in $K[T]$ but its specialization lies in $K$. Then the minimal polynomial $P(T,Y)$ of this coefficient will lie in $K[T,Y]$ and will have a root in $\mathcal{O}_{K}$, thus one is reduced to the problem of bounding the number of specializations of an irreducible polynomial which have $\mathcal{O}_{K}$-roots, and this is exactly the setting of Theorem \ref{Hilbert1} which was proved in Section \ref{section 3}. In order to carry out this strategy, it is required to bound the degree and height of the polynomial $P$. Obtaining such bounds will be the main technical aspect of this section. In order to prove Theorem \ref{HIT01} it will be convenient first to prove a variant of it for polynomials $F(T,Y)\in K[T,Y]$ that are monic on $Y$.  This is the content of the following proposition.

\begin{proposition}
Let $K$ be a global field and let $F\in \mathcal{O}_{K}[T,Y]$ be a polynomial that is monic in $Y$ and irreducible over $K$. Let $d_{T}$ and $d_{Y}$ denote the degrees in $T$ and $Y$ of $F$, respectively. Then 
\[|\left\{t\in [B]_{\mathcal{O}_{K}}:F(t,Y)\text{ is reducible over }K\right\}|\lesssim_{K} 2^{[25d_{Y},36d_{Y}]}d_{T}^{[14,26]}\left(\log(H_{K}(F))+1\right)^{[6,10]}B^{\frac{1}{2}}.\]
\label{Hilbert2}
\end{proposition}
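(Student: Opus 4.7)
The plan is to implement the strategy outlined at the beginning of Section \ref{HIT1}: reduce the count of reducibility-breaking specializations to several instances of Theorem \ref{Hilbert1}. Write $F(T,Y)=\prod_{i=1}^{d_Y}(Y-\alpha_i(T))$ over $\overline{K(T)}$. Since $F$ is monic in $Y$ with coefficients in $\mathcal{O}_K[T]$, each root $\alpha_i$ is integral over $\mathcal{O}_K[T]$, and so is every elementary symmetric function of any subset of the $\alpha_i$'s.

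For any $t\in[B]_{\mathcal{O}_K}$ for which $F(t,Y)$ factors properly in $K[Y]$, matching this factorization with the factorization of $F(t,Y)$ over $\overline{K}$ yields a proper non-empty subset $S\subsetneq\{1,\dots,d_Y\}$ such that $G_S(t,Y):=\prod_{i\in S}(Y-\alpha_i(t))$ lies in $K[Y]$. Because $F$ is irreducible in $K(T)[Y]$, the generic factor $G_S(T,Y)\in\overline{K(T)}[Y]$ cannot lie in $K(T)[Y]$, so it has a coefficient $c_S(T)\in\overline{K(T)}\setminus K(T)$ whose specialization $c_S(t)$ lies in $K$; by integrality, $c_S(t)\in\mathcal{O}_K$. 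Let $P_S(T,Y)\in\mathcal{O}_K[T,Y]$ be the minimal polynomial of $c_S(T)$ over $K(T)$ normalized to be monic in $Y$; it is irreducible, satisfies $\deg_Y P_S\ge 2$, and $(t,c_S(t))$ is an $\mathcal{O}_K^2$-zero of $P_S$. This produces the decomposition
\[\{t\in[B]_{\mathcal{O}_K}:F(t,Y)\text{ reducible}\}\subseteq \bigcup_{\emptyset\ne S\subsetneq\{1,\dots,d_Y\}}\bigl\{t\in[B]_{\mathcal{O}_K}:P_S(t,Y)\text{ has a root in }\mathcal{O}_K\bigr\},\]
and Theorem \ref{Hilbert1} can be applied to each of the at most $2^{d_Y}$ inner sets.

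The remaining task is to control $\deg_Y P_S$, $\deg_T P_S$ and $H_K(P_S)$ in terms of $d_T,d_Y$ and $H_K(F)$. For the degrees I would observe that $P_S$ divides $\widetilde P_S(T,Y):=\prod_{|I|=|S|}(Y-c_S^{(I)}(T))$, where $c_S^{(I)}$ denotes the analogous coefficient built from the subset $I$; its coefficients in $Y$ are fully symmetric polynomials in $\alpha_1,\dots,\alpha_{d_Y}$ of total degree at most $|S|\binom{d_Y}{|S|}\le d_Y 2^{d_Y}$. By the fundamental theorem of symmetric polynomials these coefficients are polynomials in the coefficients of $F$ (which have $T$-degree $\le d_T$), so $\deg_Y P_S\le \binom{d_Y}{|S|}\le 2^{d_Y}$ and $\deg_T P_S\le d_Y 2^{d_Y}d_T$.

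The principal obstacle is the height bound for $P_S$. The plan is to use the Liouville-type estimate \eqref{liouville} at every archimedean (or $v_\infty$) place of $K$ to control $|\alpha_i|_v$ in terms of $|F|_v$, then plug into the explicit symmetric-function expressions for the coefficients of $\widetilde P_S$ to obtain an estimate of the shape
\[\log H_K(\widetilde P_S)\lesssim_K d_Y 2^{d_Y}d_T\bigl(\log H_K(F)+1\bigr),\]
and finally apply a Gelfond/Mignotte-type inequality for factors of polynomials in two variables to pass from $\widetilde P_S$ to its divisor $P_S$, paying a further multiplicative factor of shape $c^{d_Y 2^{d_Y}d_T}$. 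Substituting these bounds into Theorem \ref{Hilbert1}, using $\deg_Y P_S\ge 2$ to obtain the exponent $B^{1/2}$, and summing over the at most $2^{d_Y}$ subsets $S$ should produce the factors $2^{[25 d_Y,36 d_Y]}$, $d_T^{[14,26]}$ and $(\log H_K(F)+1)^{[6,10]}$ claimed in the proposition. The careful bookkeeping required to absorb the several $2^{d_Y}$ contributions---particularly those coming from the height estimate for absolute factors of $F(T,Y)$, treated uniformly over both archimedean and non-archimedean places of $K$---is where I expect the bulk of the work to lie.
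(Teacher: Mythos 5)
Your reduction is essentially the paper's: for each bad $t$ you extract a subset of the roots whose elementary symmetric function $\tau_j(\alpha_i : i\in S)$ lies outside $K(T)$ but specializes into $\mathcal{O}_K$, pass to its minimal polynomial $P_S$, use the auxiliary product over all subsets of the same cardinality to get $2\le\deg_Y P_S\le\binom{d_Y}{|S|}$, and feed everything into Theorem \ref{Hilbert1}; this is exactly Lemma \ref{Pw} combined with the short deduction of Proposition \ref{Hilbert2}. The degree bookkeeping also matches in spirit (the paper gets the slightly sharper $\deg P_{\omega,j}\le d_T\deg_Y P_{\omega,j}$ by a growth argument on $|P_i(z)|_v$, but your symmetric-function bound $\deg_T P_S\le d_Y2^{d_Y}d_T$ only costs a harmless extra $d_Y^{O(1)}$).

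The one step where your route genuinely diverges, and where it fails to deliver the stated exponents, is the height bound for $P_S$. You propose to bound $H_K(\widetilde P_S)$ and then descend to the factor $P_S$ via a Gelfond-type inequality, paying $e^{O(\deg_T\widetilde P_S+\deg_Y\widetilde P_S)}=e^{O(d_T2^{d_Y})}$. That makes $\log H_K(P_S)\gtrsim d_T2^{d_Y}$, and since Theorem \ref{Hilbert1} raises $\log H_K(P_S)+1$ to the power $[6,10]$, you pick up an extra factor $d_T^{[6,10]}$ that overshoots the claimed $d_T^{[14,26]}$ (your intermediate estimate $\log H_K(\widetilde P_S)\lesssim_K d_Y2^{d_Y}d_T(\log H_K(F)+1)$ has the same spurious multiplicative $d_T$; done via Gauss norms it should be an additive $\log(d_T+1)$). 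The paper avoids Gelfond entirely: it bounds the sup-norm $\sup_{|z|_v\le1}|P_{\omega,j}(z,Y)|_v$ directly through the roots of $P_{\omega,j}(z,Y)$, which are among the conjugates $\tau_j(y_i(z):i\in\omega')$ and hence controlled by \eqref{bound on P2} and Claim \ref{bound for the gauss norm2}, and then recovers coefficient bounds from the sup-norm by Cauchy's integral formula in the archimedean case and the ultrametric maximum principle in the function-field case (Claim \ref{bound for the gauss norm}). This yields $H_K(P_{\omega,j})\lesssim([2^{d_Y+2}(d_T+1),1]^{d_K}H_K(F))^{\deg_Y(P_{\omega,j})}$, i.e.\ only $\log(d_T+1)$ inside the logarithm of the height, which is what keeps the final exponent of $d_T$ at $[14,26]$. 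To close the gap you should replace the Gelfond step by this direct root-based Gauss-norm estimate of the minimal polynomial itself.
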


 Before proving Proposition \ref{Hilbert2}, we will require a preliminary lemma, which generalizes \cite[Lemma 3]{SZ} and \cite[Lemme 3.1]{Walkowiak} to global fields. In order to state it, let $\overline{K(T)}$ be an algebraic closure of $K(T)$, and let
\[F(T,Y)=\prod_{i=1}^{d_{Y}}(Y-y_{i})\] 
be the factorization of $F$ over $\overline{K(T)}[Y]$. Since for all $i$, $y_{i}$ is integral over  $\mathcal{O}_{K}[T]$,then for any non-empty subset $\omega\subseteq \{1,\ldots ,d_{Y}\}$ and for any nonnegative integer $j\leq |\omega|$, $\tau_{j}(y_{i}:i\in \omega)$ is integral over $\mathcal{O}_{K}[T]$,  where $\tau_{j}$ is the $j^{\text{th}}$ fundamental symmetric function. 
Hence if $P_{\omega,j}(T,Y)$ denotes the minimal polynomial of $\tau_{j}(y_{i}:i\in \omega)$ over $K(T)$, it follows that $P_{\omega,j}\in \mathcal{O}_{K}[T,Y]$ and it is monic in $Y$. 

\begin{lemma} \label{Pw}
Let $K$ be a global field and let $F\in \mathcal{O}_K[T,Y]$ be a polynomial that is monic in $Y$ and irreducible over $K$. Let $d_T$ and $d_Y$ denote the degrees in $T$ and $Y$ of $F$, respectively. Let us suppose that $d_Y\geq 2$. Let $t\in \mathcal{O}_{K}$ such that  $F(t,Y)$ is reducible over $K$. Then there exists a non-empty subset $\omega\subseteq \{1,\ldots ,d_{Y}\}$ of cardinal $|\omega|\leq \frac{d_{Y}}{2}$  and $j\leq |\omega|$ such that 
\begin{enumerate}
\item $2\leq \deg_{Y}(P_{\omega,j})\leq 2^{d_{Y}}$; \label{c1}
\item $P_{\omega,j}(t,Y)$ has a zero in  $\mathcal{O}_{K}$; \label{c2}
\item $\deg(P_{\omega,j})\leq d_{T}\deg_{Y}(P_{\omega,j})\leq d_{T}2^{d_Y}$; \label{c3}
\item $H_{K}(P_{\omega,j})\lesssim ([2^{d_{Y}+2}(d_{T}+1),1]^{d_K}H_{K}(F))^{\deg_{Y}(P_{\omega,j})}$. \label{c4}
\end{enumerate}
\end{lemma}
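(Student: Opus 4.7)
The plan is to factor $F(t,Y)$ nontrivially, identify a subset $\omega\subseteq\{1,\ldots,d_Y\}$ of root-indices of size at most $d_Y/2$, and take $P_{\omega,j}$ to be the minimal polynomial over $K(T)$ of a suitable symmetric function $\tau_j(y_i:i\in\omega)$ that fails to lie in $K(T)$.

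First, since $F$ is monic in $Y$, its specialization $F(t,Y)\in\mathcal{O}_K[Y]$ is monic, and by Gauss's lemma for the Dedekind domain $\mathcal{O}_K$ the hypothesized reducibility gives a factorization $F(t,Y)=g(Y)h(Y)$ with $g,h\in\mathcal{O}_K[Y]$ monic and non-constant. Fix a labelling $y_1,\ldots,y_{d_Y}$ of the roots of $F(T,Y)$ in the splitting field $L$ over $K(T)$, and fix a prime $\mathfrak{P}$ of the integral closure of $\mathcal{O}_K[T]$ in $L$ lying over $(T-t)$; the reductions $\tilde y_i:=y_i\bmod\mathfrak{P}$ are then the roots of $F(t,Y)$, and $g$ picks out a subset $\omega\subseteq\{1,\ldots,d_Y\}$ which, upon swapping $g\leftrightarrow h$ if needed, satisfies $|\omega|\leq d_Y/2$. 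For every $j$ the value $\tau_j(\tilde y_i:i\in\omega)$ is (up to sign) a coefficient of $g$ and hence lies in $\mathcal{O}_K$. Gauss's lemma further upgrades the irreducibility of the monic $F$ in $K[T,Y]$ to irreducibility in $K(T)[Y]$, so $\prod_{i\in\omega}(Y-y_i)\notin K(T)[Y]$; thus some $j\in\{1,\ldots,|\omega|\}$ makes $\alpha:=\tau_j(y_i:i\in\omega)\notin K(T)$, yielding $\deg_Y(P_{\omega,j})\geq 2$. The upper bound $\deg_Y(P_{\omega,j})\leq 2^{d_Y}$ follows since the $K(T)$-conjugates of $\alpha$ are of the form $\tau_j(y_i:i\in\omega')$ with $\omega'$ in the $\mathrm{Gal}(L/K(T))$-orbit of $\omega$ inside the set of $|\omega|$-subsets of $\{1,\ldots,d_Y\}$, which has cardinality at most $\binom{d_Y}{|\omega|}\leq 2^{d_Y}$. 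Property (2) is obtained by reducing the identity $P_{\omega,j}(T,\alpha)=0$ modulo $\mathfrak{P}$.

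For (3) and (4), the plan is to compare $P_{\omega,j}$ with the auxiliary polynomial
\[
\Phi(T,Z):=\prod_{\substack{\omega'\subseteq\{1,\ldots,d_Y\}\\|\omega'|=|\omega|}}\bigl(Z-\tau_j(y_i:i\in\omega')\bigr),
\]
which is symmetric in $y_1,\ldots,y_{d_Y}$ and hence lies in $\mathcal{O}_K[T,Z]$ by the fundamental theorem on symmetric polynomials; note that $P_{\omega,j}$ is a monic-in-$Z$ divisor of $\Phi$ in $K(T)[Z]$. Expanding each coefficient of $\Phi$ in the elementary symmetric polynomials of the $y_i$'s (which up to sign are the $Y$-coefficients of $F$, hence have $T$-degree at most $d_T$ and $v$-norm at most $|F|_v$), a weighted count produces a bound on the $T$-degrees of the coefficients of $\Phi$, and local bounds of the form $|\Phi|_v\leq C_v^{\binom{d_Y}{|\omega|}}$ with $C_v$ involving the combinatorial factor $2^{d_Y+2}(d_T+1)$ at archimedean places and no such factor at non-archimedean ones, which is the source of the $[2^{d_Y+2}(d_T+1),1]^{d_K}$ bracket in (4). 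To transfer these bounds to $P_{\omega,j}$, we apply Liouville's inequality \eqref{liouville} to the roots of $P_{\omega,j}(T,\cdot)$: each such root has the form $\tau_j(y_i:i\in\omega')$ and thus inherits $v$-valuation control from Liouville applied to $F(T,y_i)=0$. Assembling the local bounds through the product formula then gives (4), and the $T$-degree bound (3) falls out of the same Liouville estimate at the place $v_\infty$ of $K(T)$.

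The main obstacle is the simultaneous treatment of the number-field and function-field cases in (4): Liouville's inequality \eqref{liouville} bifurcates between the archimedean ("$1+$") and non-archimedean regimes, and this must be threaded carefully through the $d_K$-th-power product formula. A secondary subtlety is that one cannot simply read off the coefficients of the minimal polynomial $P_{\omega,j}$ from those of its multiple $\Phi$ by naive divisibility in $\mathcal{O}_K[T,Z]$; instead one uses the Liouville control on the roots of $P_{\omega,j}$ (which are a subset of the roots of $\Phi$) to bound $P_{\omega,j}$ directly.
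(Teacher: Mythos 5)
Your construction of $\omega$, $j$ and $P_{\omega,j}$, and your arguments for items \eqref{c1} and \eqref{c2}, match the paper's proof essentially verbatim (the paper extends the specialization $T\mapsto t$ to $K[T,y_1,\ldots,y_{d_Y}]$ by integrality rather than fixing a prime $\mathfrak{P}$ over $(T-t)$, but this is the same device), and your remark that one must bound $P_{\omega,j}$ directly through its own roots rather than by dividing $\Phi$ is exactly the paper's point. The gap is in the quantitative items \eqref{c3} and \eqref{c4}, and it is twofold. First, bounding each root $y_i$ separately by Liouville, as you propose, is too lossy: a root $\alpha_l$ of $P_{\omega,j}(T,\cdot)$ is $\tau_j(y_i:i\in\omega')$, a sum of products of $j\leq|\omega|$ of the $y_i$, so if each $|y_i(z)|_v$ is only known to be $O(\max_i|a_i(z)|_v)$ then $|\alpha_l(z)|_v=O(\max_i|a_i(z)|_v^{|\omega|})$, and the product over the $\deg_Y(P_{\omega,j})$ roots yields $H_K(F)^{|\omega|\deg_Y(P_{\omega,j})}$ in \eqref{c4} and $\deg_T(P_{\omega,j})\leq |\omega|\,d_T\deg_Y(P_{\omega,j})$ in \eqref{c3}, both off by a factor of $|\omega|$ (which can be as large as $d_Y/2$) in the exponent. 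The paper avoids this with the Mahler-measure inequality of Claim \ref{bound for the gauss norm2}, namely $\prod_{i=1}^{d_Y}\max\{1,|y_i(z)|_v\}\leq[\,|d_Y+1|_v,1]\max_i|a_i(z)|_v$ (Bombieri--Gubler, Lemma 1.6.7, at archimedean places; Gauss's lemma at non-archimedean ones): it is the product of \emph{all} the root sizes, not each root individually, that is controlled by a single coefficient of $F$, and this is what produces the exponent $\deg_Y(P_{\omega,j})$ on $H_K(F)$ and the factor $d_T i$ on $\deg P_i$.

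Second, even granting sharp bounds on the roots $\alpha_l$, you never explain how these become bounds on the coefficients $p_{i,h}\in\mathcal{O}_K$ of $P_i(T)=\sum_h p_{i,h}T^h$, which are what $H_K(P_{\omega,j})$ actually measures: the $\alpha_l$ are algebraic functions of $T$, so expressing $P_i(T)$ as a symmetric function of them bounds values, not Taylor coefficients, and ``assembling through the product formula'' over places of $K(T)$ or of $L$ controls only the function-field height of $\alpha$ (essentially its degree in $T$), not the $v$-adic sizes of the $p_{i,h}$ for $v\in M_{K,\infty}$. The paper's Claim \ref{bound for the gauss norm} supplies the missing mechanism: specialize $T$ to points $z$ of the $v$-adic unit disc in $\overline{K_v}$, bound $\sup_{|z|_v\leq 1}|P_i(z)|_v$ through the roots $\alpha_l(z)$, and recover $\max_h|p_{i,h}|_v\leq\sup_{|z|_v\leq 1}|P_i(z)|_v$ by the Cauchy integral formula when $v$ is archimedean and by the maximum principle for the Gauss norm when $v$ is non-archimedean. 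Without some such step your local estimates never attach to the quantities appearing in \eqref{c4}.
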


\begin{proof}
Since $K[T,y_1,\ldots, y_{d_Y}]$ is integral over $K[T]$, by \cite[Exercise 2, Chap. 5]{MR0242802}, for any $t\in K$  the specialization morphism 
$T\to t$  extends to a morphism from $K[T,y_{1},\ldots, y_{d_Y}]$ to $\overline{K}$. 
 For $y\in K[T,y_{1},\ldots, y_{d_Y}]$ we denote $y(t)$ the image of $y$ under this morphism. 
 
 Let $t$ be as in the statement of the lemma, namely,  $t\in \mathcal{O}_{K}$  such that $F(t,Y)$ is reducible over $K$. 
Since $F$ is monic on $Y$, by Gauss's lemma it is reducible over  $\mathcal{O}_{K}$, so we have a decomposition
\[F(t,Y)=\prod_{i\in \omega}(Y-y_{i}(t))\prod_{i\notin \omega}(Y-y_{i}(t)),\]
where $\omega\subseteq \{1,\ldots, d_{Y}\}$, $|\omega|\leq \frac{d_{Y}}{2}$ and $R(Y):=\prod_{i\in \omega}(Y-y_{i}(t))\in \mathcal{O}_{K}[Y]$. Up to a sign, the coefficients of $R(Y)$ are  equal to $\tau_{j}(y_{i}(t):i\in \omega)$, $j=1,\ldots, |\omega|$. Observe that at least one of the  
  $\tau_{j}(y_{i}:i\in \omega)$ verifies that it is not in $K(T)$, otherwise $F(T,Y)$ would be reducible over $K$.
   Let $\theta_{\omega,j}$ be this element and let $P_{\omega,j}(T,Y)$ be its minimal polynomial over $K(T)$. By the discussion preceding the lemma, $P_{\omega,j}(T,Y)\in \mathcal{O}_{K}[T,Y]$ and is monic in $Y$.
    Note that $\theta_{\omega,j}(t)\in \mathcal{O}_{K}$, hence $P_{\omega,j}$ verifies condition \eqref{c2}.
 
 It is  clear that $\deg_{Y}(P_{\omega,j})\geq 2$, since otherwise $\theta_{\omega,j}$ would lie in $K(T)$. Moreover, the polynomial
 \begin{equation}\label{auxiliar polynomial to estimate degree}
\prod_{\omega'\subseteq \{1,\ldots, d_{Y}\}:|\omega'|=|\omega|}\left(Y-\tau_{j}(y_{i}:i\in \omega')\right)
\end{equation}
has degree in $Y$ equal to ${d_{Y}\choose |\omega|}\leq 2^{d_{Y}}$,  it is invariant under the action of the Galois group of $F$ so it has coefficients in $K(T)$ (and hence in $K[T]$ since $K[T]$ is integrally closed in $K(T)$) and vanishes on $\theta_{\omega,j}$. It follows that $P_{\omega,j}$ divides this polynomial, thus $\deg_{Y}(P_{\omega,j})\leq 2^{d_{Y}}$. This proves condition \eqref{c1}.

Next, we will  bound the height of $P_{\omega,j}$. Let $v\in M_{K,\infty}$. We denote by $\overline{K_{v}}$ the algebraic closure of  the $v$-completion of $K_{v}$. Thus $|\cdot|_v$ extends in a unique way to $\overline{K_v}$. Let us write 
\begin{equation}\label{expansion of Pwj}
P_{\omega,j}=\sum_{i=0}^{\deg_{Y}(P_{\omega,j})}P_{i}(T)Y^{\deg_{Y}(P_{\omega,j})-i} \text{ with }P_{i}(T)=\sum_{h}p_{i,h}T^{h}.
\end{equation}
\begin{claim}
It holds that
\[\max_{h}|p_{i,h}|_{v}\leq \sup_{z\in \overline{K_{v}}:|z|_{v}\leq 1}|p_{i}(z)|_{v}\leq \sup_{z\in \overline{K_{v}}:|z|_{v}\leq 1}|P_{\omega,j}(z,Y)|_{v}\]
\label{bound for the gauss norm}
\end{claim}
\begin{proof}[Proof of Claim \ref{bound for the gauss norm}]
 If $K$ is a number field, $\overline{K_{v}}=\mathbb{C}$, thus by Cauchy's integral formula it holds 
\[|p_{i,h}|_{v}=\left |\frac{P_{i}^{(h)}(0)}{h!}\right |_{v}\leq \sup_{z\in \overline{K_{v}}, |z|_{v}\leq 1}|P_{i}(z)|_{v}\leq \sup_{z\in \overline{K_{v}}:|z|_{v}\leq 1}|P_{\omega,j}(z,Y)|_{v}.\]
 Let us suppose now that $K$ is a function field. By the maximum principle (e.g. see \cite[$\S 2.2$, Proposition 5]{Bosch}), it holds that $\max_{h}|p_{i,h}|_{v}= \sup_{z\in \overline{K_{v}}:|z|_{v}\leq 1}|P_{i}(z)|_{v}$.
\end{proof}

Let $\alpha_{l}(z)$, $1\leq l\leq  \deg_{Y}(P_{\omega,j})$ denote the roots of $P_{\omega,j}(z,Y)$.
By means of expressing the coefficients of  $P_{\omega,j}$ in terms of symmetric functions of its roots, 
with an argument similar to the one in \cite[Lemma 1.6.7]{Bombieri} we will see that 
\begin{equation}
|P_{\omega,j}(z,Y)|_{v}\leq \left|2^{\deg_{Y}(P_{\omega,j})}\right|_{v}\prod_{l=1}^{\deg_{Y}(P_{\omega,j})}\max\{1,|\alpha_{l}(z)|_{v}\}.
\label{bound on P}
\end{equation}
Indeed, if $P_{\omega,j}(z,Y)=\sum_{i=0}^{\deg_Y(P_{\omega,j})}P_i(z)Y^{\deg_Y(P_{\omega,j})-i}$ with $P_0(T)=1$, it holds that $P_i(z)=\sum_{j_1<j_2<\cdots <j_i}\alpha_{j_1}(z)\alpha_{j_2}(z)\cdots \alpha_{j_i}(z)$. Hence, 
\[|P_i(z)|_v\leq \left|{\deg_Y(P_{\omega,j})\choose i}\right|_v\max_{j_1<j_2<\cdots <j_i}|\alpha_{j_1}(z)|_v|\alpha_{j_2}(z)|_v\cdots |\alpha_{j_i}(z)|_v\leq \left|2^{\deg_Y(P_{\omega,j})}\right|_v\prod_{l=1}^{\deg_Y(P_{\omega,j})}\max\{1,|\alpha_{l}(z)|_v\}.\]

 Since $P_{\omega,j}$ divides the polynomial in \eqref{auxiliar polynomial to estimate degree}, each $\alpha_{l}(z)$ is of the form $\tau_{j}(y_{i}(z):i\in \omega')$ for some $\omega'\subseteq \{1,\ldots, d_{Y}\}$ with $|\omega'|=|\omega|$, and hence 
\begin{equation}
|\alpha_{l}(z)|_{v}\leq \left|{ |\omega |\choose j}\right|_{v}\prod_{i\in\omega'}\max\{1,|y_{i}(z)|_{v}\}\leq |2^{|\omega|}|_{v}\prod_{i=1}^{d_{Y}}\max\{1,|y_{i}(z)|_{v}\}.
\label{bound on P2}
\end{equation}

\begin{claim}
Writing $F(T,Y)$ as $F(T,Y)=a_0(T)Y^{d_{Y}}+a_1(T)Y^{d_{Y}-1}+\cdots +a_{d_Y}(T)$ with $a_0(T)=1$, it holds that
\[\prod_{i=1}^{d_{Y}}\max\{1,|y_{i}(z)|_{v}\}\leq \left[\left|d_{Y}+1\right|_v,1\right]\max_{i}|a_{i}(z)|_{v}.\]
\label{bound for the gauss norm2}
\end{claim}
\begin{proof}[Proof of Claim \ref{bound for the gauss norm2}]
Let us suppose that $K$ is a number field. Let $\sigma:K\hookrightarrow \mathbb{C}$ be the embedding corresponding to $v$. We denote by $\sigma(F)$ the usual action of $\sigma$ on $F$.  It follows that $\prod_{i=1}^{d_{Y}}\max\{1,|y_{i}(z)|_{v}\}$ is the Mahler measure of the polynomial $\sigma(F)(z,Y)$. Then \cite[Lemma 1.6.7]{Bombieri} implies the conclusion of the claim.

On the other hand, if $K$ is a function field,  we write $F_{1}(z,Y)=\prod_{i:|y_{i}(z)|_{v}\geq 1}(Y-y_{i}(z))\in \overline{K_{v}}[Y]$ and $F_{2}(z,Y)=\prod_{i:|y_{i}(z)|_{v}<1}(Y-y_{i}(z))\in \overline{K_{v}}[Y]$, thus $F(z,Y)=F_{1}(z,Y)F_{2}(z,Y)$. Observe that $|F_{2}(z,Y)|_{v}=1$, since it is a monic polynomial on $Y$ and its coefficients are symmetric functions on elements of absolute value at most $1$. From this and the fact that $\prod_{i=1}^{d_{Y}}\max\{1,|y_{i}(z)|_{v}\}$ is the absolute value of one of the coefficients of $F_{1}(z,T)$, it follows that
\[\prod_{i=1}^{d_{Y}}\max\{1,|y_{i}(z)|_{v}\}\leq |F_{1}(z,Y)|_{v}=|F_{1}(z,Y)|_{v}|F_{2}(z,Y)|_{v}=|F(z,Y)|_{v}=\max_{i}|a_{i}(z)|_{v},\]
where the identity $|F_{1}(z,Y)|_{v}|F_{2}(z,Y)|_{v}=|F_{1}(z,Y)F_{2}(z,Y)|_{v}$ is Gauss's lemma (e.g. see \cite[Lemma 1.6.3]{Bombieri}).
\end{proof}

By  Claim \ref{bound for the gauss norm}, Claim \ref{bound for the gauss norm2}, inequalities \eqref{bound on P}, \eqref{bound on P2}, and the elementary bound
\begin{equation}\label{bound in the coef of P}
|a_{i}(z)|_{v}\leq |d_{T}+1|_{v}|F|_{v}\max\{1,|z|_{v}^{d_{T}}\} \text{ for all }i,
\end{equation}
we conclude the bound
\begin{equation}
|p_{i,h}|_{v}\leq \left(|2^{|\omega|+1}|_{v}[|d_{Y}+1|_v,1]|d_{T}+1|_{v}|F|_{v}\right)^{\deg_{Y}(P_{\omega,j})}.
\label{bound on P3}
\end{equation}
Since $|\omega|\leq \frac{d_{Y}}{2}$, inequality \eqref{bound on P3} gives the bound on $H_{K}(P_{\omega,j})$ in the item \eqref{c4} statement of the lemma.

It remains to prove \eqref{c3}. To that end, first we will bound the degrees of the $P_{i}$'s in \eqref{expansion of Pwj}. Since for each $z\in K$ $P_{i}(z)$ is up to a sign the $i^{th}$ elementary symmetric function on the roots  $\alpha_{l}(z)$, $1\leq l\leq  \deg_{Y}(P_{\omega,j})$ of $P_{\omega, j}(z,Y)$ 
and these roots can be bounded by combining \eqref{bound on P2}, Claim \ref{bound for the gauss norm2} and \eqref{bound in the coef of P}, then
$$|P_{i}(z)|_{v}\leq \left|{\deg_{Y}(P_{\omega,j})\choose i}\right|_{v}\left([|d_{Y}+1|_v,1] |2^{\omega}|_{v}|d_{T}+1|_{v}|F|_{v}\max\{1,|z|_{v}^{d_{T}}\}\right)^i=O_{F}(|z|_{v}^{d_{T}i}),$$
it follows that $\deg P_{i}\leq d_{T}i$. Then
\[\deg P_{\omega,j}= \max_{0\leq i\leq \deg_{Y}(P_{\omega,j})}(\deg_{Y}(P_{\omega,j})-i+\deg P_i)\leq d_{T}\deg_{Y}(P_{\omega,j}).\]
This ends the proof of the lemma.
\end{proof}
\begin{proof}[Proof of Proposition \ref{Hilbert2}]
We may suppose that $d_Y\geq 2$, otherwise the bound is trivial. Let  $S(B)$ be the number of $t\in [B]_{\mathcal{O}_{K}}$ such that  $F(t,Y)$ is reducible over $K$. Let $S_{\omega}(B)$ be the number of $t\in [B]_{\mathcal{O}_{K}}$  such that $P_{\omega,j}(t,Y)$ has a zero in $\mathcal{O}_{K}$. By Lemma \ref{Pw}\eqref{c2} it holds
\[S(B)\leq \sum_{\omega}S_{\omega}(B)\leq 2^{d_{Y}}\max_{\omega}S_{\omega}(B).\]
Combining Theorem \ref{Hilbert1} and Lemma \ref{Pw} it holds that
\begin{align*}
S_{\omega}(B)& \lesssim_{K} \deg(P_{\omega,j})^{[17,25]}\left( \log H_{K}(P_{\omega,j}) +1\right)^{[6,10]}B^{\frac{1}{\deg_{Y}(P_{\omega,j})}}\\ & \lesssim_{K}2^{[17d_{Y},25d_{Y}]}d_{T}^{[17,25]}\left(2^{d_Y}[(d_{Y}+2)\log(2)+\log(d_{T}+1),0]+2^{d_{Y}}\log(H_{K}(F))+1\right)^{[6,10]}B^{\frac{1}{2}} \\ & \lesssim_{K} 2^{[24d_{Y},35d_{Y}]}d_{T}^{[14,25]}\left(\log(H_{K}(F))+1\right)^{[6,10]}B^{\frac{1}{2}}.
\end{align*}
We conclude that
\begin{equation}
S(B)\lesssim_{K} 2^{[25d_{Y},36d_{Y}]}d_{T}^{[14,25]}\left(\log(H_{K}(F))+1\right)^{[6,10]}B^{\frac{1}{2}}.
\label{S(B)}
\qedhere
\end{equation}
\vspace{-0.3cm}\phantom\qedhere
\end{proof}

Having proved Proposition \ref{Hilbert2}, which is valid for polynomials which are monic in $Y$, we will now proceed to give a proof of a quantitative variant of Hilbert's irreduciblity theorem for global fields, which is the content of Theorem \ref{HIT01} of the Introduction and the main result in this article.

\begin{proof}[Proof of Theorem \ref{HIT01}]
By Lemma \ref{serre}, after multiplying by an adequate non-zero constant, we may suppose that $F$ verifies the bound \eqref{serrebound} with $\lambda=1$. Let us write $F(T,Y)=a_{0}(T)Y^{d_{Y}}+a_{1}(T)Y^{d_{Y}-1}+\cdots +a_{d_{Y}}(T)$ and let
\[G(T,Y):=a_{0}(T)^{d_{Y}-1}F\left(\frac{Y}{a_{0}(T)},T\right)=Y^{d_{Y}}+a_{1}(T)Y^{d_{Y}-1}+a_{0}(T)a_{2}(T)Y^{d_{Y}-2}+\cdots +(a_{0}(T))^{d_{Y}-1}a_{d_{Y}}(T).\]
Then $G$ is monic in $Y$, it belongs to $\mathcal{O}_{K}[T,Y]$,
$\deg_{T}(G)\leq d_Td_Y$, $\deg_{Y}(G)=d_Y$
and if $F$ is irreducible, then $G$ is irreducible. Let us now bound $H_{K}(G)$ in terms of $H_{K}(F)$. To that end, note that for any $v\in M_{K,\infty}$ it holds
\begin{align*}
|G|_{v}& =\max\{1,|a_{1}(T)|_{v},|a_{0}(T)a_{2}(T)|_{v},\ldots ,|a_{0}(T)^{d_{Y}-1}a_{d_{Y}}(T)|_{v}\} \\ & \leq [|2^{d_{T}(\frac{(d_{Y}+1)d_Y}{2}-1)}|_{v},1]\max\{1,|a_{1}(T)|_{v},|a_{0}(T)|_{v}|a_{2}(T)|_{v},\ldots, |a_{0}(T)|_{v}^{d_{Y}-1}|a_{d_{Y}}(T)|_{v}\}\\ & \leq [|2^{d_Td_Y^2}|_{v},1]\max\{1,|F|_{v}^{d_{Y}}\}\leq c_1^{d_Y}[|2^{d_Td_Y^2}|_{v},1]H_K(F)^{[\frac{n_v}{d_K^2},\frac{1}{d_K}]d_Y},
\end{align*}
where in the second inequality we have used that $|fg|_{v}\leq 2^{\deg(f)+\deg(g)}|f|_{v}|g|_{v}$ if $v$ is archimedean (e.g see \cite[Lemma 1.6.11]{Bombieri}) and $|fg|_{v}=|f|_{v}|g|_{v}$ by Gauss's lemma (e.g. see \cite[Lemma 1.6.3]{Bombieri}) if $v$ is non-archimedean, and in the fourth inequality we have used \eqref{serrebound}. Then
$$
H_{K,\text{aff}}(G)=\prod_{v\in M_{K,\infty}}\max\{1,|G|_v\}\leq \begin{cases}   c_1^{d_Yd_K^2}2^{d_{T}d_{Y}^2d_K}H_{K}(F)^{d_{Y}} & \text{ if }K\text{ is a number field}, \\ c_1^{d_Yd_K}H_{K}(F)^{d_{Y}} & \text{ if }K\text{ is a function field}.
\end{cases}
$$
By Proposition \ref{Hilbert2} it holds that the number of $t\in [B]_{\mathcal{O}_{K}}$ with $a_{0}(t)\neq 0$ and $F(t,Y)$ reducible over $K$ is bounded by
\begin{align}
\left|\left\{ t\in [B]_{\mathcal{O}_{K}}:G(t,Y) \text{ is reducible over }K \right\}\right| &  \lesssim_{K} 2^{[25\deg_{Y}(G),36\deg_{Y}(G)]}\deg_{T}(G)^{[14,25]}\left(\log(H_{K,\text{aff}}(G))+1\right)^{[6,10]}B^{\frac{1}{2}} \label{t diff of 0} \\ & \lesssim_{K} 2^{[25d_{Y},36d_{Y}]}d_{Y}^{[26,35]}d_{T}^{[20,25]}\left(\log(H_{K}(F))+1\right)^{[6,10]}B^{\frac{1}{2}}. \nonumber
\end{align}
Taking into account that there are at most $d_{T}$ values of $t$ for which $a_{0}(t)=0$ we see that \eqref{t diff of 0} implies the bound in the statement of Theorem  \ref{HIT01} .\end{proof}

\begin{remark}
In some cases by the same method of the proof of Theorem \ref{HIT01} one can derive a polynomial bound in $d_Y$ and $d_T$. Specifically,  if the polynomial $F$ in Theorem \ref{HIT01} is also assumed to be separable, combining our techniques with \cite[$\S$ 3]{MR2441693} would yield the bound
\[|\left\{t\in [B]_{\mathcal{O}_{K}}:F(t,Y)\text{ is reducible over }K\right\}| 
\lesssim_{K} d_{Y}^{c(F)}d_{T}^{[21,26]}\left(\log(H_{K}(F))+1\right)^{[6,10]}B^{\frac{1}{2}},\]
where $c(F)$ is, modulo an absolute constant, the Hilbert index defined in \cite[$\S$ 3.2]{MR2441693}, which in some cases, e.g. when the Galois group of $F$ is bounded by $d_Y^{\alpha}$ for some $\alpha>0$, can be bounded by an absolute constant (see \cite[Theorem 4.1]{MR2441693}).
\label{polyhilbertremark}
\end{remark}

From Theorem \ref{HIT01} it follows easily the next quantitative variant of Hilbert's irreducibility theorem which improves upon \cite[Theorem]{SZ} and \cite[Th\'eor\`eme 3]{Walkowiak} when $K=\mathbb{Q}$.

\begin{corollary}
Let $s\in \mathbb{N}$ and let $F(T,Y)\in \mathcal{O}_K[T,Y]$ be an irreducible polynomial of degrees $d_T$ in $T$ and $d_Y$ in $Y$. Let $C:=2^{[25d_{Y},36d_{Y}]}d_{Y}^{[26,35]}d_{T}^{[21,26]}\left(\log(H_{K}(F))+1\right)^{[6,10]}$. Then there exist a constant $c_K\lesssim_K 1$ and $s$ algebraic integers $t_1,\ldots ,t_s\in [s+c_KC^2]_{\mathcal{O}_K}$ such that the polynomials $F(t_i,Y)$ are irreducible over $K$ for all $i=1,\ldots, s$.
\end{corollary}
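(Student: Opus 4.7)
The plan is to combine Theorem \ref{HIT01} with a standard lower bound on $|[B]_{\mathcal{O}_K}|$, and show that for $B$ of size $O_K(s+C^2)$ the number of $t\in [B]_{\mathcal{O}_K}$ with $F(t,Y)$ irreducible over $K$ already exceeds $s$; any $s$ such good specializations then give the required $t_1,\ldots,t_s$.

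The first step is to record a density estimate for the counting box: for every global field $K$ there exist constants $c_K^{(1)},B_0(K)>0$ such that $|[B]_{\mathcal{O}_K}|\geq c_K^{(1)} B$ for all $B\geq B_0(K)$. In the number field case, this is the classical Minkowski lattice point count applied to the embedding $\mathcal{O}_K\hookrightarrow \mathbb{R}^{r_1}\times \mathbb{C}^{r_2}$; in the function field case, it follows from Riemann--Roch applied to large multiples of the pole divisor $\mathfrak{p}_\infty$.

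The second step is to invoke Theorem \ref{HIT01}, which yields $N_{F,K}(B)\leq c_K^{(2)}\,C\,B^{1/2}$ for some $c_K^{(2)}$ depending only on $K$. Subtracting the two estimates, the number of $t\in [B]_{\mathcal{O}_K}$ for which $F(t,Y)$ remains irreducible over $K$ is at least
\[
c_K^{(1)} B - c_K^{(2)}\,C\,B^{1/2}.
\]

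The final step is to choose $B=s+c_K C^2$ with $c_K$ a $K$-dependent constant to be calibrated. Using the elementary inequalities $B^{1/2}\leq s^{1/2}+c_K^{1/2}C$ and $2Cs^{1/2}\leq s+C^2$, one verifies that $c_K^{(1)}B-c_K^{(2)}CB^{1/2}\geq s$ as soon as $c_K$ is taken sufficiently large in terms of $c_K^{(1)}$, $c_K^{(2)}$ and $B_0(K)$. Selecting $s$ distinct good specializations produces the algebraic integers $t_1,\ldots,t_s\in [s+c_K C^2]_{\mathcal{O}_K}$ as claimed. No substantive obstacle arises; the only task is this calibration of $c_K$ so as to simultaneously dominate the density constant, the height-dependent error coming from Theorem \ref{HIT01}, and the threshold $B_0(K)$ below which the density estimate is not effective.
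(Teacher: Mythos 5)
Your overall route---lower-bound $|[B]_{\mathcal{O}_K}|$, upper-bound the exceptional set via Theorem \ref{HIT01}, and subtract---is surely the intended one (the paper offers no proof, asserting only that the corollary ``follows easily'' from Theorem \ref{HIT01}). The gap is in the final calibration, and it is not merely a matter of taking $c_K$ large: with $B=s+c_KC^2$ the constant $c_K$ enters only through the additive term $c_KC^2$, so it has no influence on the coefficient of $s$. Following your two stated inequalities one gets
\[
c_K^{(1)}B-c_K^{(2)}CB^{1/2}\;\geq\;\Bigl(c_K^{(1)}-\tfrac{1}{2}c_K^{(2)}\Bigr)s+\Bigl(c_K^{(1)}c_K-\tfrac{1}{2}c_K^{(2)}-c_K^{(2)}c_K^{1/2}\Bigr)C^2,
\]
and once $s\gg_K C^2$ this is $\geq s$ only if $c_K^{(1)}\geq 1+\tfrac{1}{2}c_K^{(2)}$, which you have not established and which there is no reason to expect, since $c_K^{(2)}$ is the (possibly large) implied constant of Theorem \ref{HIT01}. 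Part of this is repairable: if $c_K^{(1)}>1$ you should instead use the weighted inequality $c_K^{(2)}Cs^{1/2}\leq \varepsilon s+\tfrac{1}{4\varepsilon}(c_K^{(2)})^2C^2$ with $\varepsilon=c_K^{(1)}-1$, which pushes the entire loss onto the $C^2$ term, where a large $c_K$ can absorb it.

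The deeper problem is that $c_K^{(1)}>1$ fails for many global fields. The asymptotic density of $[B]_{\mathcal{O}_K}$ is $2^{r_1}(2\pi)^{r_2}|\mathrm{disc}(K)|^{-1/2}$ for a number field with $r_1$ real and $r_2$ complex places (and $q^{1-g-\deg \mathfrak{p}_{\infty}}$ for a function field of genus $g$); for $K=\mathbb{Q}(\sqrt{-163})$ this gives $|[B]_{\mathcal{O}_K}|\sim \frac{2\pi}{\sqrt{163}}B\approx 0.49\,B$. For such $K$ the box $[s+c_KC^2]_{\mathcal{O}_K}$ contains fewer than $s$ elements once $s\gg_K C^2$, so no choice of $c_K$ can produce $s$ good specializations inside it. Your argument (and, read literally, the corollary itself) closes only if the box is enlarged to $[c_K(s+C^2)]_{\mathcal{O}_K}$, with the constant multiplying $s$ as well, or if one restricts to fields such as $\mathbb{Q}$ and $\mathbb{F}_q(T)$ where the density constant exceeds $1$. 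At a minimum you need to prove the inequality $|[B]_{\mathcal{O}_K}|\geq B$ that your calibration implicitly relies on, or adjust the size of the box.
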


\section{Distribution of Galois groups}
The goal of this section is to prove Theorem \ref{Hilbert3.5}, Theorem \ref{Hilbert7}, and Theorem \ref{HIT3} of the Introduction. As with many instances of Hilbert's irreducibility theorem, it is possible to adapt the strategy of Section \ref{HIT1} to bound the subset of specializations that have a Galois group different from the Galois group of a given irreducible polynomial $F\in K[T,Y]$. More precisely, let $F\in K[T,Y]$ be an irreducible polynomial, monic in $Y$. Let us denote by $L$ the splitting field of the polynomial $F$ over $K(T)$. Let $G$ be the Galois group of $L/K(T)$. Given $t\in K$ we let $L_{t}$ be the splitting field of the polynomial $F(t,Y)\in K[Y]$. Let $G_{t}$ be the Galois group of $L_t/K$. Let us suppose that $F$ is separable as a polynomial in $K(T)[Y]$ and let $\Delta(T)$ be the discriminant of $F$, which we regard as a polynomial in $K[T]$. Let $A=K[T]$ and let $C$ be the integral closure of $A$ in $L$. Let $\mathfrak{p}=(T-t)K[T]$ and let $\mathfrak{P}$ be a prime in $C$ lying over $\mathfrak{p}$.  Let $t\in K$ such that $\Delta(t)\neq 0$, i.e. the polynomial $F(t,Y)$ does not have multiple roots in $K=A/\mathfrak{p}$. Since $L_{t}=C/\mathfrak{P}$ and the Galois group $G_t$ of $L_t/K$ is naturally identified with the Galois group of $(C/\mathfrak{P})/(A/\mathfrak{p})$, by \cite[Chapter VII, Section 2, Theorem 2.9]{LangA} there is an isomorphism of groups $\iota: G_t\to G_{\mathfrak{P}}$, where $G_{\mathfrak{P}}$ is the decomposition group of $\mathfrak{P}$, which is a subgroup of $G$. If $\rho$ denotes the natural embedding of $G$ in the permutation group $S_{\deg_Y(F)}$, it follows that there are injective morphisms $\iota,\rho_t$ such that the following is a commutative diagram
\begin{equation*}
\begin{tikzcd}
G_{t}\arrow[r,"\iota"]\arrow[rd, "\rho_{\boldsymbol t}"] & G\arrow[d, "\rho"]\\ 
& S_{\deg_Y(F)}
\end{tikzcd}
\end{equation*}
With all this at hand we are able to prove Theorem \ref{Hilbert3.5} and Theorem \ref{Hilbert7} of the Introduction.

\begin{proof}[Proof of Theorem \ref{Hilbert3.5}]
The strategy of the proof of Theorem \ref{Hilbert3.5} consists of making effective \cite[Chapter 9, Section 9.2, Proposition 2]{Serre} and then using  Theorem \ref{HIT01}. More precisely, let us suppose that $F$ is monic on $Y$.  By Lemma \ref{serre}, after multiplying by an adequate non-zero constant, we may suppose that $F$ verifies the bound \eqref{serrebound} with $\lambda=1$. Since $L/K(T)$ is a Galois extension, all its subextensions are of the form $L^{H}$ for some subgroup $H\leq G$. By the separability assumption, they are of the form $K(T)(\alpha_H)$ for some $\alpha_H\in L$, which we may suppose it is integral over $K[T]$. Let $P_H\in K[T,Y]$ be an irreducible polynomial of minimal degree which has $\alpha_H$ as a root.

\begin{claim}
Let $\mathcal{H}$ be the subset of subgroups of $G$. It holds that 
\[\{t\in [B]_{\mathcal{O}_K}:G_t\neq G\text{ and }\Delta(t)\neq 0\}\subseteq \bigcup_{H\in \mathcal{H}}\{t\in [B]_{\mathcal{O}_K}:P_H(t,Y)\text{ is reducible over }K\}.\]
\label{reduction to irreducibility}
\end{claim}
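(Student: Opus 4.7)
The plan is to show that for each $t\in [B]_{\mathcal O_K}$ with $\Delta(t)\neq 0$ and $G_t\neq G$, one can find a proper subgroup $H$ of $G$ such that $P_H(t,Y)$ admits a root in $K$, which forces reducibility over $K$ since $\deg_Y P_H=[G:H]\geq 2$. The argument is an explicit specialization-and-fixed-field computation resting on the identification of $G_t$ with a decomposition group in $G$ already recorded in the paper, and is essentially the effective face of the classical principle behind \cite[\S 9.2, Proposition 2]{Serre}.

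First I would fix such a $t$, set $\mathfrak p=(T-t)K[T]$, and pick any prime $\mathfrak P$ of $C$ lying over $\mathfrak p$. Because $\Delta(t)\neq 0$, the commutative diagram preceding the claim applies and $\iota:G_t\xrightarrow{\sim} G_{\mathfrak P}$ is an isomorphism. I would then set $H:=G_{\mathfrak P}\leq G$; since $|H|=|G_t|<|G|$, the subgroup $H$ is proper, hence an element of $\mathcal H$ distinct from $G$.

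Next I would specialize $\alpha_H$. By construction $\alpha_H$ is integral over $K[T]$, so $\alpha_H\in C$ and we may form $\bar\alpha_H:=\alpha_H\bmod\mathfrak P\in L_t$. For every $\sigma\in H=G_{\mathfrak P}$, one has $\sigma(\alpha_H)=\alpha_H$ because $\alpha_H\in L^H$; as $\sigma$ stabilizes $\mathfrak P$, its action descends to the residue field and gives $\iota^{-1}(\sigma)(\bar\alpha_H)=\bar\alpha_H$. Since $\iota$ is surjective onto $H$, every element of $G_t$ fixes $\bar\alpha_H$, so $\bar\alpha_H\in L_t^{G_t}=K$.

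Finally, $\alpha_H$ being integral over the integrally closed ring $K[T]$ forces $P_H\in K[T,Y]$ to be monic in $Y$ of degree $[G:H]\geq 2$, and specialization at $T=t$ therefore preserves the $Y$-degree. The identity $P_H(t,\bar\alpha_H)=0$ together with $\bar\alpha_H\in K$ yields the nontrivial factorization $P_H(t,Y)=(Y-\bar\alpha_H)Q(Y)$ with $\deg_Y Q\geq 1$ in $K[Y]$. This places $t$ in the set on the right indexed by $H$, proving the inclusion. The only step requiring care is the descent of the $H$-action on $\alpha_H$ to a $G_t$-action on $\bar\alpha_H$ through reduction modulo $\mathfrak P$; once this compatibility is pinned down the reducibility conclusion is immediate.
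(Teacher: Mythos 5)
Your proof is correct, and it takes a genuinely more direct route than the paper's. Both arguments hinge on the identification $\iota\colon G_t\xrightarrow{\sim}G_{\mathfrak P}$ set up before the claim, but they diverge afterwards. You choose $H:=G_{\mathfrak P}$ itself, observe that $\alpha_H\in C$ reduces modulo $\mathfrak P$ to an element fixed by all of $G_t$ (via the compatibility of the $G_{\mathfrak P}$-action with reduction, and the surjectivity of $G_{\mathfrak P}\to\mathrm{Gal}\bigl((C/\mathfrak P)/(A/\mathfrak p)\bigr)$), hence to an element of $K$; since $P_H$ is monic in $Y$ of degree $[G:H]\geq 2$, the specialization $P_H(t,Y)$ keeps that degree and acquires a $K$-root, so it is reducible. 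The paper instead fixes an auxiliary $f\in\mathfrak P$ with $\sigma(f)\notin\mathfrak P$ witnessing $G_{\mathfrak P}\neq G$, expresses $\Delta(T)^N f$ as a polynomial in some $\alpha_H$ using an explicit description of the integral closure, and derives a contradiction from the assumed irreducibility of $P_H(t,Y)$ by evaluating at the conjugate root $\sigma(\alpha_H)$. Your version is cleaner: it avoids the contradiction, the appeal to the structure of the integral closure, and the denominator-clearing step, and it pinpoints exactly which subgroup $H$ does the job for each $t$. The only points that need the care you already flag are the descent of the $H$-action through reduction modulo $\mathfrak P$ and the monicity of $P_H$ in $Y$ (which holds because $\alpha_H$ is integral over the integrally closed ring $K[T]$, and is in any case how the paper normalizes $P_H$ in the subsequent claim); since the set on the right is a union over \emph{all} subgroups, producing the single $t$-dependent subgroup $G_{\mathfrak P}$ suffices for the stated inclusion.
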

\begin{proof}[Proof of Claim \ref{reduction to irreducibility}]
Let $A=K[T]$ and let $C$ be the integral closure of $A$ in $L$. Let $t\in [B]_{\mathcal{O}_K}$ such that $\Delta(t)\neq 0$. Let $\mathfrak{P}$ be any prime lying over $\mathfrak{p}=(T-t)A$. Since $F$ is monic in $Y$, $G_t$ is identified with the decomposition group of $\mathfrak{P}$. Let us assume that $G_t\neq G$. Then there are $\sigma\in G$, $f\in \mathfrak{P}\subseteq C$ such that $\sigma(f)\notin \mathfrak{P}$. Moreover, there is $H\in \mathcal{H}$ such that $f\in K(T)(\alpha_H)$. Since $f$ is integral over $A$, it lies in the integral closure of $A$ over $K(T)(\alpha_H)$. Since  $\alpha_H$ is integral and separable, by the proof of \cite[Proposition 13.14]{Eisenbud} it follows that $f\in K[T][\{\frac{\alpha_H^i}{\Delta(T)^2}\}_{i}]$. Then, by cleaning denominators, there are $N\in \mathbb{N}_{0}$ and $P\in K[T,Y]$ such that $\Delta(T)^Nf=P(T,\alpha_H)$. By assumption, $f\in \mathfrak{P}$, thus $P(t,\alpha_H(t))=0$. Arguing by contradiction, let us suppose that $P_H(t,Y)$ is irreducible over $K$. Since $P_H(t,\alpha_H(t))=0$ it follows that  there is some $Q(Y)\in K[Y]$ verifying $P(t,Y)=Q(Y)P_H(t,Y)$. Since $P_H(T,Y)$ has as a root the element $\sigma(\alpha_H)$, it follows that $P_H(t,\sigma(\alpha_H)(t))=0$. Then $P(t,\sigma(\alpha_H)(t))=0$. But $\Delta(T)^N f=P(T,\alpha_H)$ implies that $\sigma(\Delta(T)^N f)=\Delta(T)^N \sigma(f)=P(T,\sigma(\alpha_H))$. The assumption that $\sigma(f)(t)\notin \mathfrak{P}$ implies that $\sigma(f)(t)\neq 0$. This, together with the fact that $\Delta(t)\neq 0$, implies that $P(t,\sigma(\alpha_H)(t))\neq 0$, which is a contradiction.  
\end{proof}
By Claim \ref{reduction to irreducibility}, Theorem \ref{Hilbert3.5} will follow from Theorem \ref{HIT01} if one can bound the heights and the degrees of the polynomials $P_H(T,Y)$. For this purpose, we will require to choose an adequate $\alpha_H$ for each $H\in \mathcal{H}$. This will be achieved in the next two claims.
 
\begin{claim}
Let $F(T,Y)=\prod_{i=1}^{d_Y}(Y-y_i)$ with $y_i$ lying in an algebraic closure of $K(T)$. There exists $\alpha_G=y_1+\gamma_1y_2+\cdots +\gamma_{d_Y-1}y_{d_Y}$ with $\gamma_i\in [d_Y^{i+1}]_{\mathcal{O}_{\mathbbm{k}}}$ for all $1\leq i\leq d_{Y}-1$, such that $L=K(T)(\alpha_G)$.  
\label{bound for the minimal polynomial}
\end{claim}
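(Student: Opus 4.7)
The plan is to construct $\alpha_G$ inductively, producing partial sums $\beta_i := y_1 + \gamma_1 y_2 + \cdots + \gamma_i y_{i+1}$ for $i = 0, 1, \ldots, d_Y - 1$ with the property that $\beta_i$ generates $F_i := K(T)(y_1,\ldots,y_{i+1})$ over $K(T)$ and $\gamma_j \in [d_Y^{j+1}]_{\mathcal{O}_{\mathbbm{k}}}$ for each $1 \leq j \leq i$. The base case $\beta_0 = y_1$ trivially generates $F_0 = K(T)(y_1)$, and at the final step $\beta_{d_Y-1}$ generates $F_{d_Y-1} = L$, yielding the desired $\alpha_G$.

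For the inductive step at level $i \geq 1$, I would use the Galois correspondence to reformulate the target: $\beta_i = \beta_{i-1} + \gamma_i y_{i+1}$ generates $F_i$ over $K(T)$ if and only if $\mathrm{Stab}_G(\beta_i) = \mathrm{Gal}(L/F_i)$, equivalently, for every representative $\sigma$ of each non-identity coset of $\mathrm{Gal}(L/F_i)$ in $G$,
\[
\sigma(\beta_i) - \beta_i = \bigl(\sigma(\beta_{i-1}) - \beta_{i-1}\bigr) + \gamma_i\bigl(\sigma(y_{i+1}) - y_{i+1}\bigr) \neq 0.
\]
If $\sigma(y_{i+1}) = y_{i+1}$, then $\sigma \notin \mathrm{Gal}(L/F_i)$ combined with $\sigma$ fixing $y_{i+1}$ forces $\sigma \notin \mathrm{Gal}(L/F_{i-1})$, and the inductive hypothesis on $\beta_{i-1}$ yields $\sigma(\beta_{i-1}) \neq \beta_{i-1}$, so no constraint on $\gamma_i$ arises. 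If $\sigma(y_{i+1}) \neq y_{i+1}$, then the single value $\gamma_i = (\beta_{i-1} - \sigma(\beta_{i-1}))/(\sigma(y_{i+1}) - y_{i+1}) \in L$ is forbidden.

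The key counting bound is that the number of non-identity cosets of $\mathrm{Gal}(L/F_i)$ in $G$ equals $[F_i : K(T)] - 1 \leq d_Y(d_Y-1)\cdots(d_Y-i) - 1 < d_Y^{i+1}$, so $\gamma_i$ must avoid at most $d_Y^{i+1}-1$ elements of $L$. Only those bad values actually lying in $\mathcal{O}_{\mathbbm{k}}$ can block our choice, and a direct count from the definitions of Section~\ref{alturas} gives $|[d_Y^{i+1}]_{\mathcal{O}_{\mathbbm{k}}}| \geq d_Y^{i+1}$ uniformly in both settings: for $\mathbbm{k} = \mathbb{Q}$ one has $|[B]_{\mathbb{Z}}| = 2\lfloor B\rfloor + 1$, and for $\mathbbm{k} = \mathbb{F}_q(T)$ counting polynomials of bounded degree yields $|[B]_{\mathbb{F}_q[T]}| = q^{\lfloor \log_q B\rfloor + 1} \geq B$. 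Hence a valid $\gamma_i \in [d_Y^{i+1}]_{\mathcal{O}_{\mathbbm{k}}}$ always exists by pigeonhole, completing the induction.

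The most delicate point I anticipate is the clean bookkeeping of the counting: verifying that the separability assumption on $F$ in Theorem~\ref{Hilbert3.5} makes $L/K(T)$ Galois (so that $[G : \mathrm{Gal}(L/F_i)] = [F_i : K(T)]$), and that $\sigma(\beta_i) - \beta_i$ depends only on the coset of $\sigma$ modulo $\mathrm{Gal}(L/F_i)$, ensuring each coset contributes at most one forbidden value. The degenerate case $y_{i+1} \in F_{i-1}$, where $F_i = F_{i-1}$, only decreases the number of bad values and is handled identically, so it is not a real obstacle.
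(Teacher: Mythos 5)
Your proposal is correct and follows essentially the same route as the paper: an induction adjoining one root at a time, where each $\gamma_i$ must avoid fewer than $d_Y^{i+1}$ bad values and the box $[d_Y^{i+1}]_{\mathcal{O}_{\mathbbm{k}}}$ is large enough to permit a choice. The only (harmless) difference is bookkeeping: the paper quotes the classical primitive element argument, excluding the ratios $\frac{y_i-y_1}{y_2-y_j}$ and bounding their number by $d_Y^{i+1}$, whereas you recount the excluded values via cosets of $\mathrm{Gal}(L/F_i)$ in $G$, which gives the same bound.
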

\begin{proof}[Proof of Claim \ref{bound for the minimal polynomial}]
Recall that $\mathbbm{k}=\mathbb{Q}$ if $K$ is a number field, and $\mathbbm{k}=\mathbb{F}_{q}[T]$ if $K$ is a function field. By examining the proof of the primitive element theorem as in \cite[Theorem 5.1]{Milne} we see that $y_1+\gamma y_2$ is a primitive element for $K(T)(y_1,y_2)$ for any $\gamma\in L$ which is not of the form $\frac{y_i-y_1}{y_2-y_j},j\neq 2$. Since there are at most $d_Y^2$ such possible elements in $L$ and $[d_Y^2]_{\mathcal{O}_{\mathbbm{k}}}$ has more that $d_Y^2$ elements, it follows that we may take $\gamma\in [d_Y^2]_{\mathcal{O}_{\mathbbm{k}}}$. Arguing by induction it follows that there are $\gamma_1,\ldots \gamma_{d_Y-1}$ such that $\alpha_G=y_1+\gamma_1 y_2+\ldots +\gamma_{d_Y-1}y_{d_Y}$ is a primitive element for $L/K(T)$, and $\gamma_i\in [d_{Y}^{i+1}]_{\mathcal{O}_{\mathbbm{k}}}$ for all $i$.
\end{proof}

\begin{claim}
There exists $\alpha_H\in L$ integral over $K[T]$  for which there exists a polynomial  $P_H(T,Y)\in \mathcal{O}_K[T,Y]$ that is a multiple of the minimal polynomial of $\alpha_H$ over $K(T)$, such that $\deg(P_H)\leq d_T|H||G|$ and
\[H_{K,\emph{\text{aff}}}(P_H)\leq c_1^{d_K|H||G|+d_K^2}2^{(|G|+|H||G|+|H||G|d_K+d_T|H||G|)d_K}|G|^{(|H|+1)|G|d_K}(d_Y^{d_Y+1})^{|H||G|d_K}(d_T+1)^{|H||G|d_K}H_{K}(F)^{|H||G|}.\]
\label{bound for the intermediary fields}
\end{claim}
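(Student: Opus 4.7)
The plan is to construct a primitive element $\alpha_H$ of $L^H$ over $K(T)$ as a norm-type expression in the Galois conjugates of $\alpha_G$ from Claim~\ref{bound for the minimal polynomial}, and then take $P_H$ to be the full Galois product $\prod_{\tau \in G}(Y - \tau(\alpha_H))$. Concretely, I would set
\[
\alpha_H := \prod_{\sigma \in H}\bigl(\sigma(\alpha_G) + \lambda\bigr)
\]
for a suitable $\lambda \in \mathcal{O}_{\mathbbm{k}}$; this is manifestly $H$-invariant and integral over $K[T]$. For any two distinct cosets $\tau_1 H \neq \tau_2 H$, the equality $\tau_1(\alpha_H) = \tau_2(\alpha_H)$ forces $-\lambda$ to be a root of the polynomial $\prod_{\sigma' \in \tau_1 H}(X - \sigma'(\alpha_G)) - \prod_{\sigma' \in \tau_2 H}(X - \sigma'(\alpha_G)) \in L[X]$, which is nonzero (since the conjugates of $\alpha_G$ are distinct, so the two root-multisets differ) and of degree $\leq |H|-1$. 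Summing over the $\binom{[G:H]}{2}$ pairs gives at most $|G|^2$ bad values of $\lambda$, so by a counting argument analogous to Claim~\ref{bound for the minimal polynomial} a good $\lambda \in [\,|G|^2+1\,]_{\mathcal{O}_{\mathbbm{k}}}$ exists, ensuring all $\tau(\alpha_H)$ are distinct and hence $K(T)(\alpha_H) = L^H$. Then $P_H := \prod_{\tau \in G}(Y - \tau(\alpha_H))$ is $G$-invariant in its roots and integral over $K[T][Y]$, hence lies in $\mathcal{O}_K[T, Y]$, and equals the $|H|$-th power of the minimal polynomial of $\alpha_H$ over $K(T)$, as required.

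For the degree bound, $\deg_Y P_H = |G|$, and each coefficient of $Y^{|G|-k}$ in $P_H$ is a $G$-invariant polynomial of total degree $k|H|$ in $y_1, \ldots, y_{d_Y}$ (since each $\tau(\alpha_H)$ has degree $|H|$ in the $y_i$'s), hence by the fundamental theorem of symmetric polynomials expressible as a polynomial of weighted degree $k|H|$ in the elementary symmetric functions $e_j(y_1, \ldots, y_{d_Y}) = (-1)^j a_j(T)$ (using that $F$ is monic), each of $T$-degree $\leq d_T$. This yields $\deg_T P_H \leq d_T|H||G|$ and therefore $\deg P_H \leq d_T|H||G|$, the $Y$-degree $|G|$ being absorbed since $|H| \geq 1$.

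The substantive step is the height bound, for which I would adapt the strategy of Claim~\ref{bound for the gauss norm} and Claim~\ref{bound for the gauss norm2}. At each $v \in M_{K,\infty}$ and for each coefficient $p_{i,l}$ of $P_H$, one has
\[
|p_{i,l}|_v \leq \sup_{z \in \overline{K_v},\, |z|_v \leq 1}|P_H(z, Y)|_v \leq \bigl|2^{|G|}\bigr|_v \prod_{\tau \in G}\max\{1, |\tau(\alpha_H)(z)|_v\}.
\]
Each $\tau(\alpha_H)(z)$ is a polynomial of total degree $|H|$ in the roots $y_i(z) \in \overline{K_v}$ of $F(z, Y) = 0$, whose coefficients at $v$ are bounded in terms of $|\lambda|_v \leq |G|^{2n_v/d_K}$, $|\gamma_j|_v \leq (d_Y^{d_Y+1})^{n_v/d_K}$, $|H|$, and binomial factors $\leq 2^{|H|}$. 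Taking the product over $\tau \in G$ produces a factor dominated by $\prod_i \max\{1, |y_i(z)|_v\}^{|G|}$, which by Claim~\ref{bound for the gauss norm2} is bounded by $([|d_Y+1|_v, 1]|d_T+1|_v |F|_v)^{|G|}$ via the elementary bound \eqref{bound in the coef of P}, and Lemma~\ref{serre} gives $|F|_v \leq c_1 H_K(F)^{[n_v/d_K^2, 1/d_K]}$. Multiplying over all $v \in M_{K,\infty}$ and raising to the $d_K$-th power in the definition of $H_{K,\text{aff}}$ then yields the stated bound. The main technical obstacle is bookkeeping the combinatorial contributions — the binomial factors from expanding $|G|$-fold products, the $|G|^2$ from the choice of $\lambda$, the $d_Y^{d_Y+1}$ from Claim~\ref{bound for the minimal polynomial}, the $d_T+1$ and $c_1$ from Lemma~\ref{serre} — so that their accumulated exponents fit under the explicit powers $c_1^{d_K|H||G|+d_K^2}$, $2^{(|G|+|H||G|+|H||G|d_K+d_T|H||G|)d_K}$, $|G|^{(|H|+1)|G|d_K}$, $(d_Y^{d_Y+1})^{|H||G|d_K}$, $(d_T+1)^{|H||G|d_K}$, and $H_K(F)^{|H||G|}$ appearing in the target inequality.
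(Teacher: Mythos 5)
Your overall architecture is the same as the paper's: build a primitive element $\alpha_H$ of $L^H$ from the $H$-orbit of $\alpha_G$, take the full Galois product $\prod_{\tau\in G}(Y-\tau(\alpha_H))$ as the admissible multiple of the minimal polynomial, and bound its archimedean Gauss norms through the maximum-principle/Cauchy estimate of Claim~\ref{bound for the gauss norm}, the product-over-roots inequality, Claim~\ref{bound for the gauss norm2}, Liouville and Lemma~\ref{serre}. Your nonvanishing argument for the difference polynomial is correct (it uses the separability hypothesis, which guarantees the $\sigma(\alpha_G)$ are pairwise distinct). However, two steps do not go through as written. First, the degree bound: the coefficient of $Y^{|G|-k}$ in $P_H$ is invariant only under the image $\rho(G)\leq S_{d_Y}$ acting on $y_1,\ldots,y_{d_Y}$, not under all of $S_{d_Y}$ (unless $\rho(G)=S_{d_Y}$), so the fundamental theorem of symmetric polynomials does not let you rewrite it in the $e_j(y_1,\ldots,y_{d_Y})=\pm a_j(T)$. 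The paper instead gets $\deg_T$ of each coefficient from the growth estimate $|P_i(z)|_v=O(|z|_v^{d_T|H|i})$ together with the fact that a polynomial in $K[T]$ whose values grow like $|z|^{m}$ has degree at most $m$ (as in Lemma~\ref{Pw}\eqref{c3}); you need this or an equivalent argument.

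Second, the norm-type choice $\alpha_H=\prod_{\sigma\in H}(\sigma(\alpha_G)+\lambda)$ costs you the stated constant. Expanding, $\alpha_H=\sum_{j}\lambda^{|H|-j}\tau_j$, and since $\lambda$ must be drawn from a set with more than about $|G|^2/2$ elements, $|\lambda^{|H|}|_v$ is of size $|G|^{2|H|n_v/d_K}$; after taking the product over the $|G|$ conjugates and the $d_K$-th power in $H_{K,\text{aff}}$, this contributes a factor $|G|^{2|H||G|d_K}$, which exceeds the claimed $|G|^{(|H|+1)|G|d_K}$ as soon as $|H|\geq 2$. So the bookkeeping you defer would in fact not close under the stated exponents. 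The paper avoids this by taking $\alpha_H=\tau_1+\delta_1\tau_2+\cdots+\delta_{|H|-1}\tau_{|H|}$ with $\delta_i\in[|G|^{i+1}]_{\mathcal{O}_{\mathbbm{k}}}$ (the same counting as in Claim~\ref{bound for the minimal polynomial}), which gives $|\delta_i|_v\leq (|G|^{|H|})^{n_v/d_K}$ and hence only the factor $\left||G|^{|H|+1}\right|_v$ per conjugate in \eqref{bound for the conjugate roots of alphaH}. Your construction would still yield a bound of the same shape, sufficient for the application in Theorem~\ref{Hilbert3.5}, but it does not prove the inequality as stated.
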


\begin{proof}[Proof of Claim \ref{bound for the intermediary fields}]
It holds that $L^H$ is the field extension of $K(T)$ generated by the coefficients of the polynomial $\prod_{\rho\in H}(Y-\rho(\alpha_G))$, which are of the form $\tau_j(\rho(\alpha_G):\rho\in H),1\leq j\leq |H|$. Note that the coefficients of this polynomial are integral over $K[T]$, since by construction $\alpha_G$ is integral over $K[T]$. Let $\tau_j:=\tau_j(\rho(\alpha_G):\rho\in H),1\leq j\leq |H|$. Since all the $\tau_j$ lie in $L$, they are elements of degree at most $|G|$. Then repeating the argument of Claim \ref{bound for the minimal polynomial} we may take $\alpha_H=\tau_1+\delta_1\tau_2+\cdots +\delta_{|H|-1}\tau_{|H|}$ with $\delta_i\in [|G|^{i+1}]_{\mathcal{O}_{\mathbbm{k}}}$ for all $1\leq i\leq |H|-1$. 

Let $P_H(T,Y)$ be the minimal polynomial of $\alpha_H$ over $K[T]$. By construction of $\alpha_H$, it is monic in $Y$, thus $H_K(P_H)=H_{K,\text{aff}}(P_H)$, and $\deg_Y(P_H(T,Y))=|G/H|$. Let $P(T,Y):=\prod_{\rho\in G}(Y-\rho(\alpha_H))$. Since $P$ is invariant under the action of the Galois group $G$ and $\alpha_H$ is integral over $K[T]$, it holds that $P(T,Y)\in K[T,Y]$. Furthermore, $P_H(T,Y)$ divides $P(T,Y)$ in $K(T)[Y]$ because it has $\alpha_H$ as a root. Since the principal term of $P_H(T,Y)$ as a polynomial in $Y$ is a scalar, we conclude that there is some $Q(T,Y)\in K[T,Y]$ such that $P(T,Y)=Q(T,Y)P_H(T,Y)$. Then, for all $v\in M_K$ it follows that
\begin{equation}
|P(T,Y)|_v\begin{cases}=|Q(T,Y)|_v|P_H(T,Y)|_v & \text{ if }v\text{ is non-archimedean}\\ \geq \left|2^{-(\deg(P(T,Y))}\right|_v|Q(T,Y)|_v |P_H(T,Y)|_v & \text{ if }v\text{ is archimedean}\end{cases},
\label{lower bound for the absolute value}
\end{equation}
where we have used Gauss's lemma $|fg|_v=|f|_v|g|_v$ if $v$ is  non-archimedean, and the inequality $|fg|_v\geq 2^{-(\deg(f)+\deg(g))}|f|_v|g|_v$ if $v$ is archimedean (e.g. see \cite[Lemma 1.6.11]{Bombieri}). From \eqref{lower bound for the absolute value}  and the fact that the height of any non-zero polynomial is at least $1$, it follows that 
\begin{align*}
H_{K,\text{aff}}(P_H(T,Y))=H_K(P_H(T,Y))\leq H_K(P_H(T,Y))H_K(Q(T,Y)) & \leq 2^{\deg(P(T,Y))d_K}H_K(P(T,Y))\\ & \leq 2^{\deg(P(T,Y))d_K}H_{K,\text{aff}}(P(T,Y)).
\end{align*}

Thus, in order to bound $H_{K,\text{aff}}(P_H(T,Y))$ it is enough to bound the height and degree of $P(T,Y)$. 

Let us first bound the height of $P(T,Y)$. Note that for any $\rho\in G$, $\rho(\tau_j)=\tau_j((\rho\sigma)(\alpha_G):\sigma\in H),1\leq j\leq |H|$. Furthermore, since $G$ acts by permutation on the roots of $F$, from the construction of $\alpha_G$ it follows that for all $\rho\in G$, for all $v\in M_{K,\infty}$, and for all $z\in \overline{K}_v$ it holds the bound
\begin{equation}
|\rho(\alpha_G)(z)|_v=\left|y_{\rho(1)}(z)+\gamma_1y_{\rho(2)}(z)+\cdots +\gamma_{d_Y-1}y_{\rho(d_Y)}(z)\right|_v\leq |d_Y^{d_Y+1}|_v\max_{1\leq i\leq d_Y}\{|y_i(z)|_v\}.
\label{primitive element bound}
\end{equation}
Then, for all $\rho\in G$, for all $v\in M_{K,\infty}$, and for all $z\in \overline{K}_v$ it follows that
\[|\rho(\tau_j)(z)|_v\leq \left|{|H|\choose j}\right|_v\max_{I\subseteq H:|I|=j}\prod_{\sigma \in I}|(\rho\sigma)(\alpha_G)(z)|_v\leq |2^{|H|}|_v|d_Y^{d_Y+1}|_v^j\max_i\{|y_i(z)|_v^{j}\}\text{ for all } 1\leq j\leq |H|,\]
and hence 
\begin{align}|\rho(\alpha_H)(z)|_v =\left|\rho(\tau_1)(z)+\delta_1\rho(\tau_2)(z)+\cdots +\delta_{|H|-1}\rho(\tau_{|H|})(z)\right|_v & \leq \left||H||G|^{|H|}\right|_v\max_j\{|\rho(\tau_j)(z)|_v\} \nonumber \\  & \leq \left||G|^{|H|+1}\right|_v|2^{|H|}|_v|d_Y^{d_Y+1}|_v^{|H|}\max\{1,|y_i(z)|_v\}^{|H|}.
\label{bound for the conjugate roots of alphaH}
\end{align}

By Liouville's inequality \eqref{liouville}, $|y_i(z)|_v\leq 2\max_{1\leq i\leq d_Y}\{|a_i(z)|_v\}$. Then, arguing as in the proof of inequality \eqref{bound on P}, from inequalities \eqref{bound in the coef of P} and  \eqref{bound for the conjugate roots of alphaH}, it follows that for all $v\in M_{K,\infty}$ and for all $z\in \overline{K_v}$ with $|z|_v\leq 1$ it holds the bound
\begin{align}
|P(z,Y)|_v & \leq \left| 2^{|G|}\right|_v \prod_{\rho\in G}\max\{1,|\rho(\alpha_H)(z)|_v\}\leq \left|2^{|G|}\right|_v\left(\left||G|^{|H|+1}\right|_v|2^{|H|}|_v|d_Y^{d_Y+1}|_v^{|H|}\right)^{|G|}  \left(\max_{1\leq i\leq d_Y}\{1,|y_i(z)|_v\}\right)^{|H||G|} \nonumber\\ & \leq \left|2^{|G|}\right|_v\left(\left||G|^{|H|+1}\right|_v|2^{|H|}|_v|d_Y^{d_Y+1}|_v^{|H|}\right)^{|G|} 2^{|H||G|} \left(\max_{1\leq i\leq d_Y}\{1,|a_i(z)|_v\}\right)^{|H||G|}\nonumber \\ & \leq \left|2^{|G|}\right|_v\left(\left||G|^{|H|+1}\right|_v|2^{|H|}|_v|d_Y^{d_Y+1}|_v^{|H|}\right)^{|G|} 2^{|H||G|} |d_T+1|_v^{|H||G|} \max\{1,|F|_v\}^{|H||G|}.
\label{primitive element bound2}
\end{align}
By the same argument as in the proof of Claim \ref{bound for the gauss norm},  and by inequality \eqref{primitive element bound2}, we conclude that for all $v\in M_{K,\infty}$,
\begin{equation}
|P(T,Y)|_v\leq \left|2^{|G|}\right|_v\left(\left||G|^{|H|+1}\right|_v|2^{|H|}|_v|d_Y^{d_Y+1}|_v^{|H|}\right)^{|G|} 2^{|H||G|} |d_T+1|_v^{|H||G|}\max\{1,|F|_v\}^{|H||G|}.
\label{bound11}
\end{equation}
Then
\begin{align}
H_{K,\text{aff}}(P(T,Y)) & =\left(\prod_{v\in M_{K,\infty}}\max\{1,|P(T,Y)|_v\}\right)^{d_K}  \nonumber\\ & \leq \left( \prod_{v\in M_{K,\infty}} (2c_1)^{|H||G|}\left|2^{|G|+|H||G|}|G|^{(|H|+1)|G|}(d_Y^{(d_Y+1)|H||G|})(d_T+1)^{|H||G|}\right|_vH_K(F)^{[\frac{n_v}{d_K^2},\frac{1}{d_K}]|H||G|} \right)^{d_K} \nonumber \\ & =c_1^{d_K|H||G|}2^{(|G|+|H||G|+|H||G|d_K)d_K}|G|^{(|H|+1)|G|d_K}(d_Y^{d_Y+1})^{|H||G|d_K}(d_T+1)^{|H||G|d_K}H_{K}(F)^{|H||G|}.
\end{align}

Let us now  bound the degree of $P(T,Y)$. Note that the coefficients of $P(z,Y)$ as a polynomial in $K[Y]$ are, up to a sign, symmetric functions on the roots $\rho(\alpha_H),\rho\in G$. Then, arguing as in the proof of Lemma \ref{Pw}\eqref{c3}, it follows that $\deg(P)\leq d_T|H||G|$. 
\end{proof}

By Theorem \ref{HIT01} and Claim \ref{bound for the intermediary fields}, it follows that
\begin{align}
|\{t\in [B]_{\mathcal{O}_K}:P_H(t,Y & ) \text{ is reducible over }K\}| \nonumber\\ & \lesssim_K 2^{[25\deg_Y(P_H),36\deg_Y(P_H)]}\deg_Y(P_H)^{[26,35]}\deg_T(P_H)^{[21,26]}(\log(H_{K,\text{aff}}(P_H))+1)^{[6,10]}B^{\frac{1}{2}} \nonumber \\ & \lesssim_K 2^{[25|G/H|,36|G/H|]}|G/H|^{[26,35]}d_T^{[28,37]}d_Y^{[7,11]}|H|^{[27,36]}|G|^{[28,37]}(\log(H_{K}(F))+1)^{[6,10]}B^{\frac{1}{2}}. \label{Hilbertbound00}
\end{align}
Theorem \ref{Hilbert3.5} follows from Claim \ref{reduction to irreducibility}, inequality \eqref{Hilbertbound00}, and by taking into account that $|\mathcal{H}|\lesssim_{|G|}1\lesssim_{d_Y}1$ and that the contribution of the $t$'s with $\Delta(t)=0$ only changes the implicit constant. 

The case when $F$ is non monic is dealt as in the proof of Theorem \ref{HIT01}.
\end{proof}

\begin{remark}
In some cases  the argument in the proof of Theorem \ref{Hilbert3.5} can give polynomial dependence on $d_Y$ and $d_T$. More specifically, the bounds in $d_Y$ in general are not polynomial because both proofs reduce the problem of bounding the exceptional specializations to that of bounding the number of specializations of some polynomials that depend on the Galois group $G$ of $L/K(T)$. In general, the group $G$ can be as large as $d_Y!$. Nonetheless, if $|G|\leq d_Y^{\alpha}$ for some absolute constant $\alpha$,  the degree of the minimal polynomial $P_{H}$ can be bounded polynomially on $d_Y,d_T$. Furthermore, instead of considering $\mathcal{H}$ as all subgroups of $G$, one may just consider those maximal subgroups of $G$. In this case, there are results that give polynomial bounds for $\mathcal{H}$, e.g. by \cite[Theorem 1.3]{MR2360145}, $|\mathcal{H}|\lesssim |G|^{\frac{3}{2}}$. Taking these observations in consideration, by Remark \ref{polyhilbertremark} one can prove Theorem \ref{Hilbert3.5} with a polynomial bound in $d_Y$.
\end{remark}

We now give a proof of Theorem \ref{Hilbert7} of the Introduction.

\begin{proof}[Proof of Theorem \ref{Hilbert7}]
As in the proof of Theorem \ref{HIT01} we may suppose that $F$ is monic, with $F(T,Y)=Y^{d_Y}+g_{1}(T)Y^{d_Y-1}+\cdots +g_{d_Y}(T)\in \mathcal{O}_{K}[T,Y]$. It is easy to see that \cite[Lemma 4]{Castillo} is also valid for number fields, so let $\Phi_{F,K}(Z,T)$ be the Galois resolvent constructed in that lemma. Then, if $t\in [B]_{\mathcal{O}_K}$ is such that $F(t,Y)$ has Galois group $H$ over $K$, then $\Phi_{F,H}(Z,t)$ has a root $z\in \mathcal{O}_{K}$. Factoring $\Phi_{F,H}(Z,T)$ over $K$, by Gauss lemma each irreducible factor can be assumed to have $\mathcal{O}_{K}$-coefficients, monic on $Z$, and having degree at least $|G/H|$ and at most $|S_{d_Y}/H|$. Moreover, there are at most $\frac{|H|}{|G|}|S_{d_Y}|$ such factors. We conclude the proof by applying Theorem \ref{Hilbert1} to each such irreducible factor of $\Phi_{F,H}(Z,T)$. 
\end{proof}

As a corollary, we deduce Theorem \ref{HIT3} of the Introduction.

\begin{proof}[Proof of Theorem \ref{HIT3}]
The bounds follow from Theorem \ref{Hilbert7} and from the fact that if $F(T,Y)=g_{0}(T)Y^{d_Y}+g_{1}(T)X^{d_Y-1}+\cdots +g_{d_Y}( T)$, then $g_{0}(T)\neq 0$ and $\Delta(T)\neq 0$ and the number of exceptional $ t\in [B]_{\mathcal{O}_{K}}$ for which $f(t,Y)$ has degree less than $d_Y$ or becomes inseperable are those $t$ for which $g_{0}(t)=0$ or $\Delta(t)=0$ is bounded by $\lesssim_{K,\deg(F)}1$. 

\end{proof}

As a concluding remark, we mention that, arguing by induction on the number of parameters and using Kronecker's substitution (see for instance \cite[Lemma 7.1]{Cohen}), one may extend Theorem \ref{Hilbert1} and then Theorem \ref{HIT01}, Theorem \ref{Hilbert3.5}, Theorem \ref{Hilbert7} and Theorem \ref{HIT3} to polynomials $F\in K[T_1,\ldots, T_s,Y]$. However, by the nature of the induction and the fact that Theorem \ref{Hilbert1} depends on $H_K(F)$, the bound has a factor $(\log(B))^{\nu}$ for some $\nu$. Thus, in the higher dimensional case the bounds one would obtain are not optimal in $B$.  For the sake of completeness we state without the proof the bound that would be obtained by this procedure that extends Theorem \ref{Hilbert1} to the case of several parameters.

\begin{theorem}
Let $K$ be a global field. Let $P(T_{1},\ldots, T_{s},Y)\in \mathcal{O}_{K}[T_{1},\ldots, T_{s},Y]$ be irreducible of degree $d_Y$ in $Y$, and let us suppose that $P$ is monic of degree $d_{Y}$. Then there are positive constants $\mu,\nu\lesssim_{s}1$ such that
\[\left|\left\{ (t_{1},\ldots, t_{s})\in [B]_{\mathcal{O}_{K}}^{s}:P(t_{1},\ldots, t_{s},Y)=0\text{ has a solution }y\in \mathcal{O}_{K} \right\}\right|\lesssim_{K,s}(\log(H_{K}(P))+1)^{\mu}B^{s-1+\frac{1}{d_{Y}}}(\log(B))^{\nu}.\]
\end{theorem}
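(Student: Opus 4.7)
The plan is to proceed by induction on the number of parameters $s$. The base case $s = 1$ is exactly Theorem \ref{Hilbert1}, which gives the bound with $\mu = [6,10]$ and $\nu = 0$. For the inductive step, assume the theorem holds for $s-1$ parameters. Given $P \in \mathcal{O}_K[T_1, \ldots, T_s, Y]$ irreducible and monic in $Y$ of degree $d_Y$, fix $(t_1, \ldots, t_{s-1}) \in [B]_{\mathcal{O}_K}^{s-1}$ and consider the specialization $P_{(t_1, \ldots, t_{s-1})}(T_s, Y) := P(t_1, \ldots, t_{s-1}, T_s, Y) \in \mathcal{O}_K[T_s, Y]$. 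Since $P$ is irreducible in $K[T_1,\ldots,T_s,Y]$, by Gauss's lemma it is irreducible in $K(T_1, \ldots, T_{s-1})[T_s, Y]$, so the specialization $P_{(t_1, \ldots, t_{s-1})}$ is irreducible in $K[T_s, Y]$ for generic $(t_1, \ldots, t_{s-1})$.

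For those $(t_1, \ldots, t_{s-1})$ where $P_{(t_1, \ldots, t_{s-1})}$ is irreducible in $K[T_s, Y]$, I would apply the base case $s=1$ of Theorem \ref{Hilbert1} to bound the number of $t_s \in [B]_{\mathcal{O}_K}$ for which $P_{(t_1, \ldots, t_{s-1})}(t_s, Y)$ has an $\mathcal{O}_K$-root by $\lesssim_K (\log H_K(P_{(t_1, \ldots, t_{s-1})}) + 1)^{[6,10]} B^{1/d_Y}$. A routine height estimate, analogous to the one at the beginning of the proof of Theorem \ref{Hilbert1}, gives $\log H_K(P_{(t_1, \ldots, t_{s-1})}) \lesssim_{K,s,\deg P} \log H_K(P) + \log B$. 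Summing over $[B]_{\mathcal{O}_K}^{s-1}$, this case contributes $\lesssim_{K,s} (\log H_K(P) + 1)^{\mu_1} B^{s-1 + 1/d_Y} (\log B)^{\nu_1}$ for appropriate $\mu_1, \nu_1 \lesssim_s 1$.

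For the remaining $(t_1, \ldots, t_{s-1})$ the specialization $P_{(t_1, \ldots, t_{s-1})}$ becomes reducible in $K[T_s, Y]$. To bound the number of such tuples, I would use Kronecker's substitution as in \cite[Lemma 7.1]{Cohen}: choosing an integer $N$ strictly larger than the maximal partial degree of $P$, the substitution $T_i \mapsto Z^{N^{i-1}}$ for $1 \leq i \leq s-1$ is injective on the monomials of $P$, so the substituted polynomial $\tilde{P}(Z, T_s, Y) := P(Z, Z^N, \ldots, Z^{N^{s-2}}, T_s, Y)$ remains irreducible in $K[Z, T_s, Y]$. Applying Theorem \ref{HIT01} to $\tilde P$, together with an inductive application of a coupled reducibility-variant of the statement for $s-1$ parameters, bounds the number of these exceptional tuples. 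Multiplied by the trivial bound $B$ on the free variable $t_s$, the resulting contribution is $\lesssim_{K,s} (\log H_K(P) + 1)^{\mu_2} B^{s-1+1/d_Y}(\log B)^{\nu_2}$.

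Combining the two contributions and setting $\mu := \max(\mu_1, \mu_2)$ and $\nu := \max(\nu_1, \nu_2)$ completes the induction. The hardest part will be the bad-case analysis: a naive iteration of Theorem \ref{HIT01} followed by the trivial bound $B$ on $t_s$ only yields an exponent $B^{s-1/2}$, which matches $B^{s-1+1/d_Y}$ only when $d_Y = 2$. Securing the sharper exponent for $d_Y > 2$ requires interlocking this induction with a simultaneous induction on a reducibility-variant of Theorem \ref{HIT01} extended to $s-1$ parameters, while carefully tracking how the Kronecker substitution affects both heights and partial degrees so that the parasitic growth is absorbed into the $(\log B)^{\nu}$ factor without degrading the $B$-exponent.
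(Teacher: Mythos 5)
First, note that the paper states this theorem explicitly \emph{without proof}; the only guidance given is the one-sentence recipe ``induction on the number of parameters and Kronecker's substitution,'' which is the recipe you follow, so at the level of strategy you are aligned with the authors' stated intent. Your good-case analysis (specialize $(t_1,\dots,t_{s-1})$, apply Theorem \ref{Hilbert1} to the resulting bivariate polynomial, absorb the growth of its height into the $(\log B)^{\nu}$ factor, and sum over $\lesssim_K B^{s-1}$ tuples) is correct and is exactly where the parasitic $(\log B)^{\nu}$ the authors warn about comes from.

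The bad case, however, contains a genuine gap that you flag but do not close. The real obstruction is not reducibility per se but the tuples $(t_1,\dots,t_{s-1})$ for which $P(t_1,\dots,t_{s-1},T_s,Y)$ acquires a factor \emph{linear in $Y$}, i.e.\ a root $g\in\mathcal{O}_K[T_s]$: for such a tuple every $t_s\in[B]_{\mathcal{O}_K}$ contributes, so one must show there are $\lesssim B^{s-2+1/d_Y}$ of them, whereas a reducibility count (any multi-parameter analogue of Theorem \ref{HIT01}) can only give $B^{s-3/2}$, which is too large precisely when $d_Y\ge 3$ --- the discrepancy you compute. The proposed ``interlocking reducibility-variant'' does not address this, because what is needed is not a finer reducibility count but a strengthened inductive statement: a version of Theorem \ref{Hilbert1} in $s-1$ parameters counting tuples for which the specialization has a root \emph{in the polynomial ring} $\mathcal{O}_K[T_s]$ (equivalently, a Cohen--Serre exponent $|G/H|^{-1}=1/d_Y$ attached to the point stabilizer, as in Theorem \ref{Hilbert7}, but in several parameters and over all global fields). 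Separately, your Kronecker substitution is pointed the wrong way: substituting $T_i\mapsto Z^{N^{i-1}}$ in the \emph{parameter} variables only reaches tuples lying on the curve $(z,z^N,\dots,z^{N^{s-2}})$, so Theorem \ref{HIT01} applied to $\tilde{P}$ says nothing about the number of exceptional tuples in the full box $[B]_{\mathcal{O}_K}^{s-1}$; in Cohen's Lemma 7.1 the substitution is performed on the variables being \emph{collapsed} (here $T_s$ and $Y$) so as to verify the irreducibility hypothesis of the lower-dimensional theorem, not on the variables being specialized.
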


\bibliography{paper}
\bibliographystyle{abbrv}

\end{document}